\begin{document}

\newcommand{\be}{\begin{equation}}
\newcommand{\ee}{\end{equation}}
\newcommand{\bea}{\begin{eqnarray}}
\newcommand{\eea}{\end{eqnarray}}
\newcommand{\beaa}{\begin{eqnarray*}}
\newcommand{\eeaa}{\end{eqnarray*}}

\renewcommand{\proofname}{\bf Proof}
\newtheorem*{rem*}{Remark}
\newtheorem*{cor*}{Corollary}
\newtheorem{prop}{Proposition}
\newtheorem{lem}{Lemma}
\newtheorem{theo}{Theorem}
\newfont{\zapf}{pzcmi}

\def\R{\mathbb{R}}
\def\N{\mathbb{N}}
\def\E{\mathbb{E}}
\def\P{\mathbb{P}}
\def\V{\mathbb{D}}
\def\I{\mathbbm{1}}

\def\al{\alpha}
\def\bu{$\bullet$}
\newcommand{\D}{\hbox{ \zapf D}}

\title{Probabilities of competing binomial random variables}

\author{Wenbo V. Li\thanks{Department of Mathematical Sciences, University of Delaware,
\texttt{wli@math.udel.edu}. Supported in part by NSF grant DMS--0805929, NSFC-6398100, CAS-2008DP173182.}
\and
Vladislav V. Vysotsky\thanks{This work started at the University of Delaware. The current affiliations are School of Mathematical and Statistical Sciences, Arizona State University; St.Petersburg Division of Steklov Mathematical Institute; and Chebyshev Laboratory at St.Petersburg State University, \texttt{vysotsky@asu.edu}. Supported in part by the grant NSh. 4472-2010-1.}}

\maketitle

\begin{abstract}
Suppose you and your friend both do $n$ tosses of an unfair coin with probability of heads equal to $\alpha$. What is the behavior of the
probability that you obtain at least $d$ more heads than your friend if you make $r$ additional tosses?
We obtain asymptotic and monotonicity/convexity properties for this competing probability as a function of $n$,
and demonstrate surprising phase transition phenomenons as parameters $ d, r$ and $\alpha$ vary.
Our main tools are integral representations based on Fourier analysis.

{\it MSC2000:} 60B99, 60F99, 42A61.

{\it Keywords:} Binomial random variable, number of successes, competing random variables, probability of winning, coin tossing, phase transition.

\end{abstract}

\section{Introduction}

Suppose you and your friend both do $n$ tosses of an unfair coin with probability of heads equal to $\alpha$. What is the behavior of the competing
probability that you obtain at least $d$ more heads than your friend if you make $r$ additional tosses?


For a fair coin with $\alpha=1/2$ and $r=d=1$, this is Example 3.33 on page 118 of the textbook \cite{G}:
``Adam tosses a fair coin $n+1$ times, Andrew tosses the same coin $n$ times. What is the probability that Adam gets more heads than Andrew?''

Two solutions are offered in the textbook. One uses the symmetry and finds the answer $1/2$ easily.
The other is a direct computation, following the identity
\begin{equation} \label{comb-2}
\sum_{i=0}^n \sum_{j=i+1}^{n+1} {n+1 \choose j} {n \choose i} =2^{2n},
\end{equation}
and the author noted that ``a combinatorial solution to this problem is neither elegant nor easy to handle''.

The same problem also appeared as Problem 21 of the self-test problems and exercises on page 115 of the textbook \cite{R}, and asks the following: ``If A flips $n+1$ and B flips $n$ fair coins, show that the probability that A gets more heads than B is $1/2$.'' This problem is at the end of the chapter on conditional probability and independence,
and with the hint that one should ``condition on which player has more heads after each has flipped $n$ coins.''

What happens if the coin is unfair is not mentioned in both textbooks.
In this paper, we consider competing probability of two independent binomials and the associated phase transition behaviors as parameters vary.

To pose the problem formally, let $S_k$ and $S_n'$ be independent binomial random variables, that is, $S_k = X_1 + \dots + X_k$ and $S_n' = X_1' + \dots + X_n'$, where
$\{ X_i, X_i' \}_{i \ge 1}$ are independent identically distributed random variables that equal $1$ with probability $\alpha$ and $0$ with probability $1-\alpha$. The textbooks consider the probabilities $$p_n:=\P \bigl\{ S_{n+1} \ge S_n' +1 \bigr\}$$ while we are interested in the more general
$$
p_n^{r , d }:=\P \bigl\{ S_{n+r} \ge S_n' + d\bigr\},
$$
where of course $p_n^{1 ,1}=p_n$; we always assume that $\alpha \in (0,1)$ and $r, d \ge 1$.

Clearly we can write a combinatorial expression
\begin{eqnarray}
p_n^{r , d }&=&\sum_{i=0}^n \sum_{j=i+d}^{n+r}\P(S_{n+r}=j)\cdot \P(S_n'=i) \nonumber \\
&=&\sum_{i=0}^n \sum_{j=i+d}^{n+r} {n+r \choose j} {n \choose i}\alpha^{i+j}(1-\alpha)^{2n+r-i-j} \label{comb}
\end{eqnarray}
by using independence and binomial probabilities. Unlike (\ref{comb-2}), the expression (\ref{comb}) for $p_n^{r , d }$ is of a little use in analyzing finer behaviors as parameters vary. On the other hand, by the Cental Limit Theorem, for any $r, d$ and $\alpha$ as above,
\begin{equation} \label{clt 1/2}
\lim_{n \to \infty} p_n^{r , d } = \lim_{n \to \infty} \P \Bigl\{ \frac{S_{n+r}}{\sqrt{n}} \ge \frac{S_n' + d}{\sqrt{n}} \Bigr\}=\P \bigl\{ N > N' \bigr\} =\frac12,
\end{equation}
where $N$ and $N'$ are independent normal random variables with mean $0$ and variance $\alpha (1-\alpha)$.
This standard technique allows one to find the limit but tells nothing about the mode and exact rate of convergence. Our first result provides 
a useful integral representation for  $p_n^{r , d }$ which implies precise asymptotic.

\begin{theo}\label{MAIN TH-1}
For any positive integers $r$ and $d$,
\begin{equation} \label{f12}
p_n^{r , d } = \frac12+\frac{1}{\pi}\int_0^1 Q_\alpha^n(x) (1-x^2)^{-1/2} P_{\alpha}^{r, d} (x) dx
\end{equation}
where
\begin{equation}\label{Qa}
Q_\alpha(x):=1 - 4 \alpha (1-\alpha) (1 - x^2)
\end{equation}
and
\begin{equation}\label{Pa}
P_\alpha^{r, d} (x) :=  \sum_{j=0}^{r+d} \frac{d^{2j}}{d
t^{2j}} \Bigl( t^{j-d} (1 - \alpha + \alpha t)^r \Bigr) \Bigl. \Bigr |_{t=-1} \cdot
\frac{(2x)^{2j}}{(2j)!}.
\end{equation}
As a consequence,
\be\label{sqrta}
\lim_{n \to \infty}\sqrt{n} \bigl(p_n^{r,d}-{1 \over 2}\bigr)= \frac{1}{4\sqrt{\pi}} \frac{2 \alpha r-2d+1}{\sqrt{\alpha(1-\alpha)}}.
\ee
\end{theo}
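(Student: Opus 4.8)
The plan is to merge the two independent binomials into one integer-valued random variable and extract the tail probability by Fourier inversion. Writing $T:=S_{n+r}-S_n'=\sum_{i=1}^n(X_i-X_i')+\sum_{i=n+1}^{n+r}X_i$, I would set $Y_i:=X_i-X_i'$, which are i.i.d.\ and symmetric on $\{-1,0,1\}$ with $\P(Y_i=\pm1)=\al(1-\al)$, and $W:=\sum_{i=n+1}^{n+r}X_i\sim\mathrm{Bin}(r,\al)$ independent of them, so that $T=Y_1+\dots+Y_n+W$ is supported on $[-n,n+r]$. Their characteristic functions are $\E e^{i\theta Y_1}=1-2\al(1-\al)(1-\cos\theta)=Q_\al(\cos(\theta/2))$ by (\ref{Qa}) and $1-\cos\theta=2\sin^2(\theta/2)$, together with $\E e^{i\theta W}=(1-\al+\al e^{i\theta})^r$, whence $\phi_T(\theta)=Q_\al(\cos(\theta/2))^n(1-\al+\al e^{i\theta})^r$. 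Since $T$ is integer-valued I would write $p_n^{r,d}=\P(T\ge d)=\tfrac12+\tfrac12 A$ with $A:=\P(T\ge d)-\P(T\le d-1)$, a finite signed sum of the masses $\P(T=k)=\frac1{2\pi}\int_{-\pi}^\pi\phi_T(\theta)e^{-ik\theta}\,d\theta$.

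Next I would compute $A$ by Fourier inversion. Because the geometric sums $\sum_{k\ge d}e^{-ik\theta}$ and $\sum_{k\le d-1}e^{-ik\theta}$ diverge, I insert an Abel convergence factor (equal to $\rho^{k-d}$ for $k\ge d$ and $\rho^{d-1-k}$ for $k\le d-1$, with $\rho\in(0,1)$) and let $\rho\to1$; finiteness of the support guarantees that the damped quantity returns $A$. The two damped series combine over the common denominator $1-2\rho\cos\theta+\rho^2$ into the kernel $K_\rho(\theta)=-2i(1+\rho)\sin(\theta/2)e^{-i(d-1/2)\theta}/(1-2\rho\cos\theta+\rho^2)$, whose limit as $\rho\to1$ is $-ie^{-i(d-1/2)\theta}/\sin(\theta/2)$. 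Taking real parts (as $A$ is real) and using evenness in $\theta$ yields
\be
A=\frac1\pi\int_0^\pi Q_\al(\cos(\tfrac\theta2))^n\,\frac{\mathrm{Im}[(1-\al+\al e^{i\theta})^r e^{-i(d-1/2)\theta}]}{\sin(\theta/2)}\,d\theta.
\ee
The apparent singularity of $1/\sin(\theta/2)$ at $\theta=0$ is removable because the imaginary part in the numerator vanishes linearly there, so dominated convergence legitimizes the passage $\rho\to1$. Combining with $p_n^{r,d}=\tfrac12+\tfrac12 A$ and substituting $x=\cos(\theta/2)$ (so $d\theta=-2(1-x^2)^{-1/2}dx$ and $\sin(\theta/2)=\sqrt{1-x^2}$) produces exactly (\ref{f12}) with the identification $P_\al^{r,d}(x)=\mathrm{Im}[(1-\al+\al e^{i\theta})^r e^{-i(d-1/2)\theta}]/\sin(\theta/2)$.

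Matching this trigonometric expression with the derivative formula (\ref{Pa}) is the step I expect to be the main obstacle. Expanding $(1-\al+\al e^{i\theta})^r=\sum_{m=0}^r\binom rm\al^m(1-\al)^{r-m}e^{im\theta}$ turns the numerator into $\sum_m\binom rm\al^m(1-\al)^{r-m}\sin((2(m-d)+1)\theta/2)$, so that
\be
P_\al^{r,d}(x)=\sum_{m=0}^r\binom rm\al^m(1-\al)^{r-m}\,U_{2(m-d)}(x),
\ee
where $U_{2k}(x)=\sin((2k+1)\theta/2)/\sin(\theta/2)$ is the Chebyshev polynomial of the second kind; in particular $P_\al^{r,d}$ is an even polynomial. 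I would then invoke the explicit expansion $U_{2k}(x)=\sum_j(-1)^{k-j}\binom{k+j}{2j}(2x)^{2j}$, rewrite $\binom{k+j}{2j}=(k+j)^{\underline{2j}}/(2j)!$ with $k=m-d$, and recognize $(-1)^{m-d-j}(m+j-d)^{\underline{2j}}=\frac{d^{2j}}{dt^{2j}}t^{m+j-d}\big|_{t=-1}$. Summing these against $\binom rm\al^m(1-\al)^{r-m}t^m$ reconstitutes $\frac{d^{2j}}{dt^{2j}}(t^{j-d}(1-\al+\al t)^r)|_{t=-1}$, and interchanging the $m$- and $j$-summations gives precisely (\ref{Pa}). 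The only delicate bookkeeping is the sign and falling-factorial arithmetic, which must be checked to cover the cases $m<d$, where the exponent $m+j-d$ is negative.

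Finally, for (\ref{sqrta}) I would apply Laplace's method to (\ref{f12}). On $[0,1]$ the factor $Q_\al$ increases to its maximum $Q_\al(1)=1$ and is strictly smaller elsewhere, so $\sqrt n\,Q_\al^n(x)$ concentrates at $x=1$ and the contribution of $[0,1-\delta]$ is $O(\sqrt n\,(1-c\delta)^n)\to0$. Near $x=1$ I would use $Q_\al(x)=1-4\al(1-\al)(1-x^2)\approx1-8\al(1-\al)(1-x)$, $(1-x^2)^{-1/2}\approx(2(1-x))^{-1/2}$, and $P_\al^{r,d}(1)=2\al r-2d+1$, the last obtained from the $\theta\to0$ limit $\mathrm{Im}[(1-\al+\al e^{i\theta})^r e^{-i(d-1/2)\theta}]/\sin(\theta/2)\to(2\al r-2d+1)$. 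The scaling $1-x=\sigma/n$ then reduces the integral to $\frac{2\al r-2d+1}{\pi\sqrt2}\int_0^\infty\sigma^{-1/2}e^{-8\al(1-\al)\sigma}\,d\sigma$, and the Gamma integral $\int_0^\infty\sigma^{-1/2}e^{-a\sigma}\,d\sigma=\sqrt{\pi/a}$ with $a=8\al(1-\al)$ collapses the constants to $\frac1{4\sqrt\pi}\frac{2\al r-2d+1}{\sqrt{\al(1-\al)}}$, as claimed. A uniform domination of the rescaled integrand (equivalently, an appeal to Watson's lemma) makes this limit rigorous.
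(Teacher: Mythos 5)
Your argument is correct, and it reaches \eqref{f12} by a genuinely different route from the paper's. You Fourier--invert the distribution of $T=S_{n+r}-S_n'$ directly, using the factorization $\phi_T(\theta)=Q_\alpha^n(\cos(\theta/2))(1-\alpha+\alpha e^{i\theta})^r$, and extract the tail difference $A=\P(T\ge d)-\P(T\le d-1)$ with an Abel factor, which produces the closed kernel $-ie^{-i(d-1/2)\theta}/\sin(\theta/2)$ and hence the representation in one stroke. The paper instead runs a one-step-back analysis (Lemma~\ref{DIFF p_n^r,d 1}) to express the increment $p_{n+1}^{r,d}-p_n^{r,d}$, Fourier--inverts only the symmetric difference $S_n'-S_n$, and then telescopes the increments against the geometric series $\sum_i Q_\alpha^{n+i}$, anchoring the telescoping at the CLT limit \eqref{clt 1/2}. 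Both routes land on the identical Chebyshev expansion: your $P_\alpha^{r,d}(x)=\sum_{m=0}^r\P\{S_r=m\}\,U_{2(m-d)}(x)$ is exactly the paper's $\tilde P_\alpha^{r,d}$ in \eqref{P tilda} once you use $U_{-2k-2}=-U_{2k}$, and the passage from there to \eqref{Pa} is the same falling-factorial and extended-binomial-coefficient computation in both (the $m<d$ bookkeeping you defer does go through, exactly as in the paper's treatment of negative upper indices, since $\frac{d^{2j}}{dt^{2j}}t^{a}\big|_{t=-1}=(-1)^{a}a^{\underline{2j}}$ holds for negative integers $a$ as well). What your route buys is directness: no recursion, no telescoping, and no appeal to \eqref{clt 1/2}. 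What the paper's route buys is the increment formula of Lemma~\ref{DIFF p_n^r,d 2}, which is the workhorse for Theorems~\ref{MAIN TH-2}, \ref{MAIN 3} and \ref{MAIN 2} and which your derivation does not produce. One point to tighten: in the passage $\rho\to 1$ you must take the real part, i.e.\ isolate the factor $\mathrm{Im}\bigl[(1-\alpha+\alpha e^{i\theta})^re^{-i(d-1/2)\theta}\bigr]$, \emph{before} invoking dominated convergence, because $\sup_{\rho}|K_\rho(\theta)|\asymp |\sin(\theta/2)|^{-1}$ is not integrable near $0$; it is the linear vanishing of that imaginary part at $\theta=0$ that supplies the uniform integrable majorant. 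Your Laplace-method derivation of \eqref{sqrta}, including the constant, is also correct and parallels the paper's, which performs the same concentration argument in the $\theta$ variable rather than after the substitution $x=\cos(\theta/2)$.
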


The actual degree of $P_\alpha^{r,d}(x)$ is $2 \max (r-d, d-1)$ as the higher coefficients in \eqref{Pa} vanish. In Section~\ref{ANALYSIS} we give another convenient formula for $P_{\alpha}^{r, d}(x)$ in terms of Chebyshev polynomials of the second kind, and then easily show that
$$P_{\alpha}^{r, d}(1)=2 \alpha r-2d+1.
$$
Note that for any $\alpha \in (0,1)$, the function $Q_\alpha(x)$ is increasing on $[0,1]$ with $Q_\alpha(0) \ge 0$ and $Q_\alpha(1)=1$. Therefore as $n$ increases, the main contribution to the integral in \eqref{f12} comes from a decreasing small neighborhood of $1$, and a standard analysis of \eqref{f12} implies \eqref{sqrta}. Another advantage of the integral representation (\ref{f12}) stems from the isolation of the variable $n$ from parameters $r$ and $d$.

Since the integral representation for $p_n^{r,d}$ in (\ref{f12}) can be integrated out via trigonometric substitution $x=\cos (t/2)$, it seems possible to check the equivalence of (\ref{comb}) and (\ref{f12}) by pure algebraic manipulations. However, such an approach provides no probabilistic insights into the integral representation and no clues on how we discovered it. We will present a proof via a combination of one-step-back analysis and Fourier analytic methods, along the line we initially derived the representation.

Next we consider the mode of convergence in terms of monotonicity and convexity properties in the simplest setting $r=d=1$. Let us agree that ``increasing/decreasing'' stand for ``strictly increasing/strictly decreasing'' throughout this paper.

\begin{theo} \label{MAIN TH-2}
The sequence $p_n=p_n^{1, 1}=\P \{ S_{n+1} \ge S'_n+1 \}= \P \{ S_{n+1} > S'_n \}, \, n = 1,2, \dots,$ is monotone and convex/concave. Precisely,
$p_n$ is increasing and concave when $\alpha < 1/2$,  decreasing and convex when $\alpha > 1/2$, and equal to $1/2$ for all $n$ when $\alpha = 1/2$.
\end{theo}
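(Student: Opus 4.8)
The plan is to feed $r=d=1$ into the integral representation \eqref{f12} of Theorem~\ref{MAIN TH-1} and reduce the entire statement to the sign analysis of a single weighted integral. First I would identify the polynomial $P_\alpha^{1,1}(x)$. Since the excerpt records that the actual degree of $P_\alpha^{r,d}(x)$ is $2\max(r-d,d-1)$, for $r=d=1$ this degree is $0$, so $P_\alpha^{1,1}(x)$ is a constant; evaluating through the stated identity $P_\alpha^{r,d}(1)=2\alpha r-2d+1$ gives $P_\alpha^{1,1}(x)\equiv 2\alpha-1$. Hence \eqref{f12} collapses to
\be
p_n = \frac12 + \frac{2\alpha-1}{\pi}\, I_n, \qquad I_n := \int_0^1 Q_\alpha^n(x)(1-x^2)^{-1/2}\,dx .
\ee
This already explains the trichotomy at the level of the limiting value: the sign of $p_n-\tfrac12$ is governed entirely by the sign of $2\alpha-1$, and at $\alpha=1/2$ the prefactor vanishes, forcing $p_n\equiv 1/2$.

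For the monotonicity and convexity I would work with the finite differences of $I_n$ rather than of $p_n$ directly. The structural fact that drives everything is that on $[0,1]$ one has $Q_\alpha(x)-1 = -4\alpha(1-\alpha)(1-x^2)\le 0$, while $Q_\alpha(x)>0$ on $(0,1)$ (indeed $Q_\alpha(0)=(1-2\alpha)^2$). Writing the first difference as
\be
I_{n+1}-I_n = \int_0^1 Q_\alpha^n(x)\bigl(Q_\alpha(x)-1\bigr)(1-x^2)^{-1/2}\,dx = -4\alpha(1-\alpha)\int_0^1 Q_\alpha^n(x)(1-x^2)^{1/2}\,dx,
\ee
the last integrand is strictly positive on $(0,1)$, so $I_{n+1}-I_n<0$ whenever $\alpha\neq 1/2$; that is, $(I_n)$ is strictly decreasing. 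Similarly the second difference telescopes as
\be
I_{n+2}-2I_{n+1}+I_n = \int_0^1 Q_\alpha^n(x)\bigl(Q_\alpha(x)-1\bigr)^2(1-x^2)^{-1/2}\,dx > 0,
\ee
so $(I_n)$ is strictly convex as a sequence.

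Combining these facts with the prefactor $2\alpha-1$ completes the argument. When $\alpha<1/2$ the prefactor is negative, so $p_{n+1}-p_n=\frac{2\alpha-1}{\pi}(I_{n+1}-I_n)>0$ (increasing), while $p_{n+2}-2p_{n+1}+p_n=\frac{2\alpha-1}{\pi}(I_{n+2}-2I_{n+1}+I_n)<0$ (concave); when $\alpha>1/2$ both signs flip, yielding decreasing and convex. I do not anticipate a genuine obstacle: once $P_\alpha^{1,1}$ is pinned down as the constant $2\alpha-1$, the whole theorem reduces to the elementary observations that $Q_\alpha(x)-1$ is nonpositive and its square nonnegative. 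The only point requiring mild care is confirming that the relevant integrals are \emph{strictly} signed rather than merely weakly, which is immediate since $Q_\alpha^n(x)(1-x^2)^{\pm 1/2}$ is continuous and strictly positive on $(0,1)$.
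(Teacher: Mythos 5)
Your proposal is correct and follows essentially the same route as the paper: after identifying $P_\alpha^{1,1}(x)\equiv 2\alpha-1$, your first difference $\frac{2\alpha-1}{\pi}(I_{n+1}-I_n)=\frac{4\alpha(1-\alpha)(1-2\alpha)}{\pi}\int_0^1 Q_\alpha^n(x)\sqrt{1-x^2}\,dx$ is exactly the formula the paper obtains from Lemma~\ref{DIFF p_n^r,d 2}, and your explicit second difference with the factor $(Q_\alpha(x)-1)^2$ is just a more concrete form of the paper's observation that the integrand is monotone in $n$ pointwise. No gaps; the strictness check via $Q_\alpha(x)>(1-2\alpha)^2\ge 0$ on $(0,1)$ is handled adequately.
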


\begin{figure}[ht]
\centering
\includegraphics[width=0.32\textwidth]{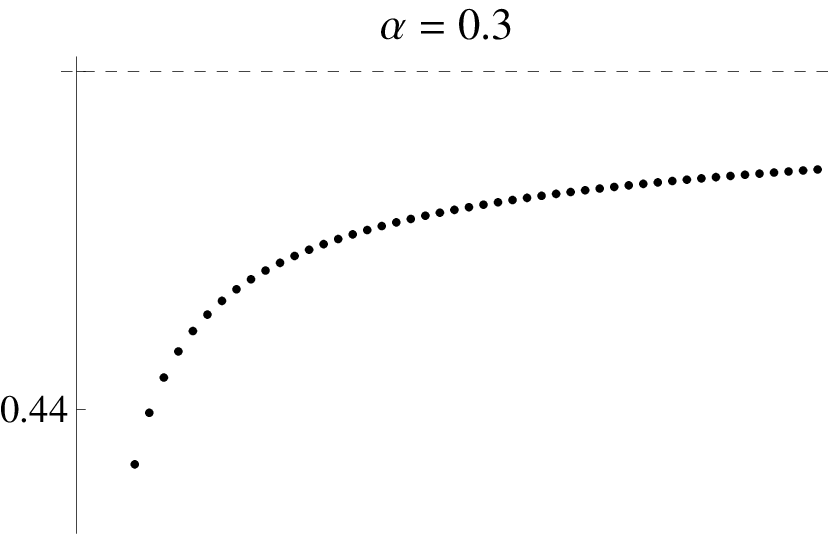}
\includegraphics[width=0.32\textwidth]{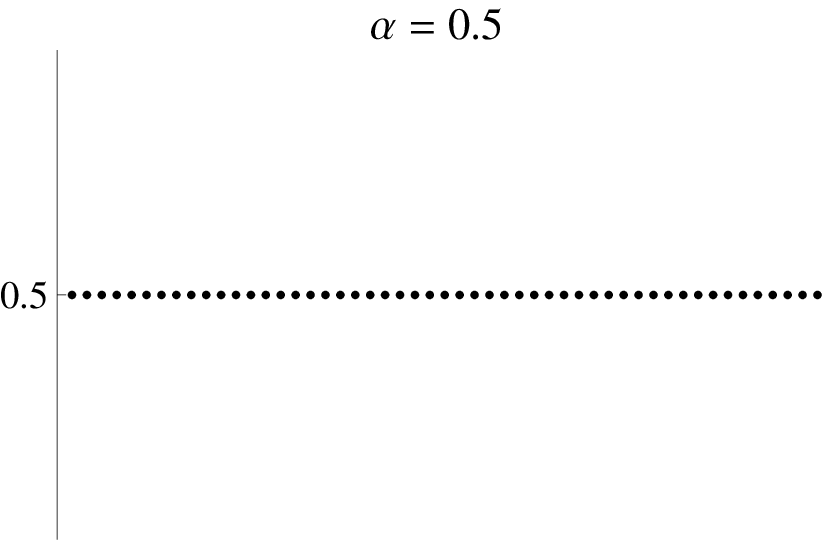}
\includegraphics[width=0.32\textwidth]{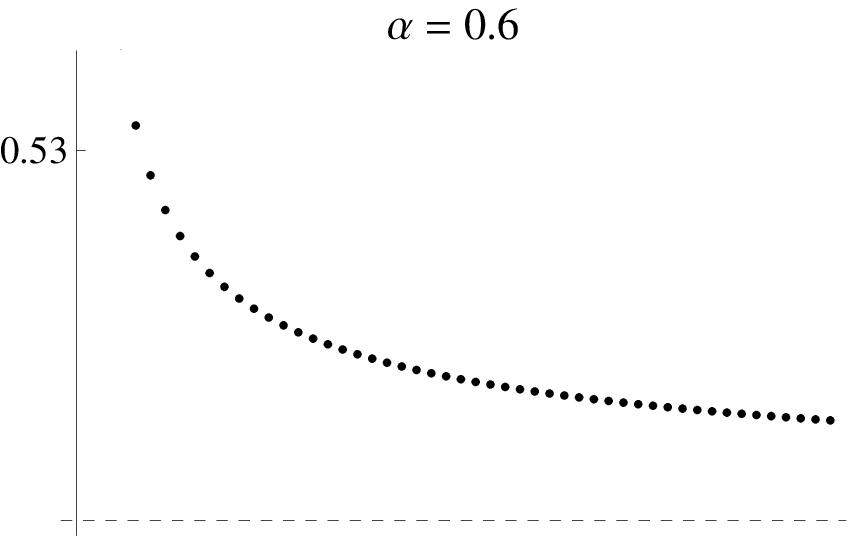}
\caption{The sequence $p_n$, $0 \le n \le 50$, for different $\alpha$. The dashed line corresponds to 0.5.} \label{Fig 1}
\end{figure}

It is remarkable that the sequence $p_n^{1, 1}$ is monotone starting from its first term. We first discovered this feature when studying some very specific probabilistic properties of the so-called double-sided exponential random variables, see the discussion at the end of the paper.

Observe that as $\alpha$ decreases from $1$ to $0$, the behavior of the sequence $p_n^{1, 1}$ changes instantly at the critical value $1/2$, see Fig.~\ref{Fig 1}. This phenomenon becomes much clear in the setting with $r \ge 2$ and $d=1$. In order to give the best statement of our results, from this point on we assume that $n \ge 0$ rather than $n \ge 1$, which was natural for the introduction of the problem. We say that a sequence $a_n, n \ge 0,$ is unimodal with the mode $N \ge 1$ if $a_0 < a_1 < \dots < a_{N-1} \le a_N > a_{N+1} > a_{N+2} > \dots$ .

\begin{theo} \label{MAIN 3}
For $r \ge 2$, the sequence $p_n^{r, 1} $ is increasing when $0 < \alpha \le \frac{1}{2r}$,
unimodal when $\frac{1}{2r} < \alpha \le \frac{1}{r+1}$, and decreasing when $\frac{1}{r+1} < \alpha < 1$. In addition, the mode $N_{\alpha, r} $ satisfies
\begin{equation} \label{mode}
N_{\alpha, r}  \sim \frac{r-1}{4r} \Bigl(\alpha-{1 \over 2r} \Bigr)^{-1} \quad \hbox{as} \quad \alpha \downarrow {1 \over 2r}
\end{equation}
while for the maximum
\begin{equation} \label{maximum}
p_{N_\alpha,r} - \frac12 \sim \frac43 \sqrt{\frac{ r^5 }{ \pi (2r-1)(r-1)}} \Bigl(\alpha-{1 \over 2r} \Bigr)^{3/2} \quad \hbox{as} \quad \alpha \downarrow {1 \over 2r}.
\end{equation}
\end{theo}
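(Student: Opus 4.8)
The plan is to control the forward differences $\Delta_n:=p_{n+1}^{r,1}-p_n^{r,1}$ through two complementary exact formulas. Substituting \eqref{f12} and using $Q_\alpha(x)-1=-4\alpha(1-\alpha)(1-x^2)$ gives
\[
\Delta_n=-\frac{4\alpha(1-\alpha)}{\pi}\,I_n,\qquad I_n:=\int_0^1 Q_\alpha^n(x)\,(1-x^2)^{1/2}P_\alpha^{r,1}(x)\,dx,
\]
so $\mathrm{sign}\,\Delta_n=-\,\mathrm{sign}\,I_n$. Independently, a one-step-back computation on $D_n:=S_{n+r}-S_n'$, obtained by conditioning $\Delta_n$ on the fresh increment $X_{n+r+1}-X'_{n+1}\in\{-1,0,1\}$, produces the probabilistic identity
\[
\Delta_n=\alpha(1-\alpha)\bigl(\P\{D_n=0\}-\P\{D_n=1\}\bigr).
\]
The two regimes meet at the value singled out by the elementary evaluation $\Delta_0=\alpha(1-\alpha)^r\bigl(1-(r+1)\alpha\bigr)$, positive for $\alpha<\tfrac1{r+1}$ and negative for $\alpha>\tfrac1{r+1}$; I would use the integral formula on $\{\alpha\le\tfrac1{r+1}\}$ and the probabilistic one on $\{\alpha>\tfrac1{r+1}\}$.

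On $\{\alpha\le\tfrac1{r+1}\}$ the crux is that $P_\alpha^{r,1}$ has at most one sign change on $(0,1)$. With $x=\cos(\theta/2)$ the Chebyshev representation of Section~\ref{ANALYSIS} reads $P_\alpha^{r,1}(\cos(\theta/2))=\rho^r\,\frac{\sin(r\phi-\theta/2)}{\sin(\theta/2)}$, where $\rho e^{i\phi}=(1-\alpha)+\alpha e^{i\theta}$ and $\rho^2=Q_\alpha$, so the sign of $P_\alpha^{r,1}$ equals that of $\sin\!\bigl(g(\theta)\bigr)$ with $g(\theta):=r\phi(\theta)-\tfrac\theta2$. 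For $\alpha\le\tfrac1{r+1}<\tfrac12$ the maximal argument obeys $r\phi\le r\arcsin\tfrac{\alpha}{1-\alpha}\le r\arcsin\tfrac1r<\pi$, so $g$ stays in $[-\tfrac\pi2,\pi)$ and $\sin g$ changes sign only at zeros of $g$; since $g(0)=0$, $g(\pi)=-\tfrac\pi2$, and $g'(\theta)=0$ is linear in $\cos\theta$ (hence has at most one root), $g$ has at most one zero in $(0,\pi)$—exactly one when $g'(0)=r\alpha-\tfrac12>0$. Granting this, let $x_0$ be the sign-change point and $u_0:=Q_\alpha(x_0)>0$; as $Q_\alpha$ is increasing, $Q_\alpha-u_0$ and $P_\alpha^{r,1}$ share the same sign pattern, so
\[
I_{n+1}-u_0 I_n=\int_0^1\bigl(Q_\alpha(x)-u_0\bigr)Q_\alpha^n(x)(1-x^2)^{1/2}P_\alpha^{r,1}(x)\,dx>0,
\]
forcing $I_n$ to change sign at most once, from $-$ to $+$. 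When $\alpha\le\tfrac1{2r}$ we have $P_\alpha^{r,1}\le 0$, hence $I_n<0$ and $\Delta_n>0$ for all $n$: increasing. When $\tfrac1{2r}<\alpha\le\tfrac1{r+1}$ we have $I_0\le0$ (from $\Delta_0\ge0$) while \eqref{sqrta} gives $p_n>\tfrac12$ for large $n$, so $p_n$ cannot increase throughout; thus $I_n$ switches sign exactly once and $p_n^{r,1}$ first rises, then falls—unimodal (the endpoint $\alpha=\tfrac1{r+1}$ giving the degenerate mode $N=1$ permitted by the $\le$ in the definition).

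On $\{\alpha>\tfrac1{r+1}\}$, where $P_\alpha^{r,1}$ may change sign many times, I would instead use the probabilistic identity. Writing $D_n=Z_n+V$ with $Z_n:=(X_1+\dots+X_n)-S_n'$ symmetric and $V\sim\mathrm{Bin}(r,\alpha)$ independent, the sequence $q_k:=\P\{Z_n=k\}$ is a convolution of log-concave sequences, hence log-concave, symmetric, and non-increasing in $k\ge0$. Setting $\delta_v:=q_{v-1}-q_v\ge0$ $(v\ge1)$ and using $q_{-k}=q_k$, a short rearrangement gives
\[
\P\{D_n=1\}-\P\{D_n=0\}=\delta_1(1-\alpha)^{r-1}\bigl((r+1)\alpha-1\bigr)+\sum_{v=2}^{r}\binom{r}{v}\alpha^v(1-\alpha)^{r-v}\delta_v.
\]
For every $n\ge0$ one has $\delta_1=q_0-q_1>0$ and all $\delta_v\ge0$, so for $\alpha>\tfrac1{r+1}$ the right-hand side is strictly positive; hence $\Delta_n<0$ for all $n$ and $p_n^{r,1}$ is decreasing.

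For the mode and maximum as $\alpha\downarrow\tfrac1{2r}$ (the unimodal regime, with $N_{\alpha,r}\to\infty$), I would perform a Watson-type expansion of $I_n$ and of $p_n-\tfrac12=\tfrac1{2\pi}\int_0^\pi\rho^{2n+r}\frac{\sin(r\phi-\theta/2)}{\sin(\theta/2)}\,d\theta$ near $\theta=0$, the only region surviving as $\rho^{2n+r}=Q_\alpha^{\,n+r/2}$ concentrates there. Writing $\epsilon:=\alpha-\tfrac1{2r}$, $g(\theta)=A\theta+B\theta^3+O(\theta^5)$ with $A=r\alpha-\tfrac12=r\epsilon$ and $B=\tfrac13 r\alpha(1-\alpha)(\alpha-\tfrac12)\to-\tfrac{(r-1)(2r-1)}{24r^2}$, and $\rho^{2n+r}\approx e^{-a\theta^2}$ with $a:=n\alpha(1-\alpha)$, Gaussian integration yields $I_n\propto A+\tfrac{3B}{2a}$ and $p_n-\tfrac12\approx\tfrac1{2\sqrt\pi}a^{-1/2}\bigl(A+\tfrac{B}{2a}\bigr)$. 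The mode is the zero of $I_n$, i.e.\ $a^\ast=-\tfrac{3B}{2A}$, which gives $N_{\alpha,r}=a^\ast/(\alpha(1-\alpha))\sim\tfrac{r-1}{4r}\epsilon^{-1}$, proving \eqref{mode}; inserting $a=a^\ast$ turns the bracket into $\tfrac{2A}{3}$, and simplifying the resulting $A^{3/2}(-B)^{-1/2}$ reproduces \eqref{maximum}. I expect the genuine difficulty to lie exactly here: because $N_{\alpha,r}\to\infty$ and $\alpha\to\tfrac1{2r}$ simultaneously (so that $a^\ast\to\infty$ while the effective scale is $\theta\sim\sqrt\epsilon$), the sign of $I_n$ is governed by a near-cancellation between the $A\theta$ and $B\theta^3$ contributions, and one must justify the Gaussian reduction and the neglect of the $O(\theta^5)$-terms and higher $\rho$-corrections uniformly in this joint limit—controlled by cutting the integral at $\theta\asymp\epsilon^{1/2-\eta}$ and bounding the remainder by the exponential smallness of $\rho^{2n+r}$ away from the origin.
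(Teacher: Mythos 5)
Your proposal is correct, and several of its components take a genuinely different route from the paper, so a comparison is worth recording. The skeleton coincides: you control $\Delta_n=p_{n+1}^{r,1}-p_n^{r,1}$ through the integral of Lemma~\ref{DIFF p_n^r,d 2}, and your ``once $I_n\ge 0$, always $I_m>0$'' propagation via the factor $Q_\alpha(x_0)$ is exactly the paper's \eqref{Q>Q}. The two real departures are these. (i) For the key fact that $P_\alpha^{r,1}$ changes sign at most once on $[0,1]$, the paper shows that every coefficient of the polynomial except the constant term is positive (Leibniz formula plus a decreasing-ratio argument), so that $P_\alpha^{r,1}$ is increasing; you instead use the closed form $P_\alpha^{r,1}(\cos(\theta/2))=\rho^r\sin(r\phi-\theta/2)/\sin(\theta/2)$ with $\rho e^{i\phi}=1-\alpha+\alpha e^{i\theta}$, note that the phase $g=r\phi-\theta/2$ stays in $(-\pi,\pi)$ because $r\phi\le r\arcsin(\alpha/(1-\alpha))\le r\arcsin(1/r)\le\pi/3$ for $\alpha\le 1/(r+1)$, and that the numerator of $g'$ is affine in $\cos\theta$, so $g$ has at most one interior zero; I checked the representation and the derivative computation, and both are correct. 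Your version feeds directly into the later asymptotics, while the paper's gives the stronger structural statement about the coefficients. (ii) For the decreasing regime $\alpha>1/(r+1)$ the paper (via Case (4c) of Theorem~\ref{MAIN 2}) pairs the index $i$ with $2d-i-1$ and analyzes ratios of binomial probabilities; your rearrangement $\P\{D_n=1\}-\P\{D_n=0\}=\sum_v\P\{S_r=v\}(q_{v-1}-q_v)$, isolating the single possibly negative term as $\delta_1(1-\alpha)^{r-1}((r+1)\alpha-1)$ and using only the monotonicity of $q$ from Lemma~\ref{DIFF p_n^r,d 1}, is simpler for $d=1$ (the paper's argument is built to cover general $d$). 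The asymptotics \eqref{mode} and \eqref{maximum} are obtained by the same Laplace method as in the paper, merely organized around the expansion $g(\theta)=A\theta+B\theta^3+O(\theta^5)$ instead of the Taylor expansion of $P_\alpha^{r,1}$ at $x=1$; your constants $A=r\alpha-\tfrac12$ and $B\to-(r-1)(2r-1)/(24r^2)$ reproduce both displayed formulas exactly. Two small points you should still write out, both handled explicitly in the paper: that $N_{\alpha,r}\to\infty$ as $\alpha\downarrow 1/(2r)$ (for fixed $n$, $\Delta_n$ is continuous in $\alpha$ and strictly positive at $\alpha=1/(2r)$), and the bridging $I_{N-1}\sim I_N$ needed because the mode is an integer rather than the exact zero of $n\mapsto I_n$; the uniform control of the remainders in the joint limit that you flag as the main difficulty is carried out in the paper by dominated convergence after the substitution $t=s/\sqrt{n}$, exactly as you propose.
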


\begin{figure}[ht]
\centering

\subfloat[]{\label{fig(a)}\includegraphics[width=0.32\textwidth]{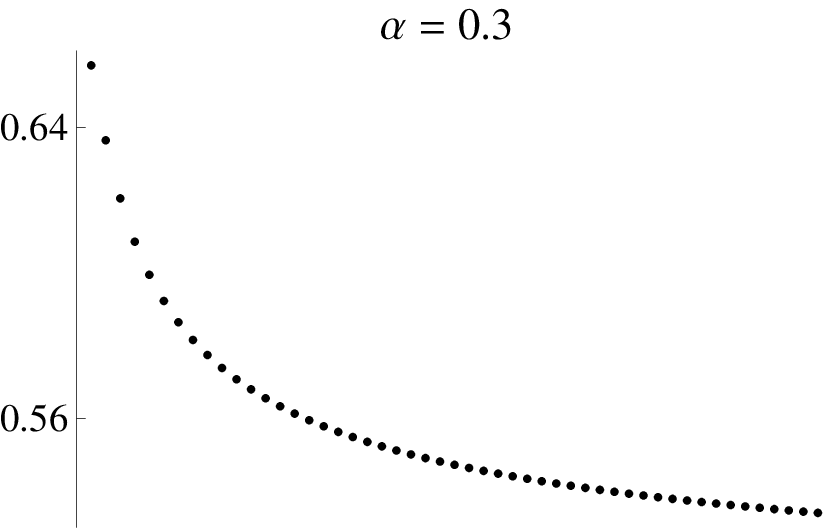}}
\subfloat[]{\label{fig(b)}\includegraphics[width=0.32\textwidth]{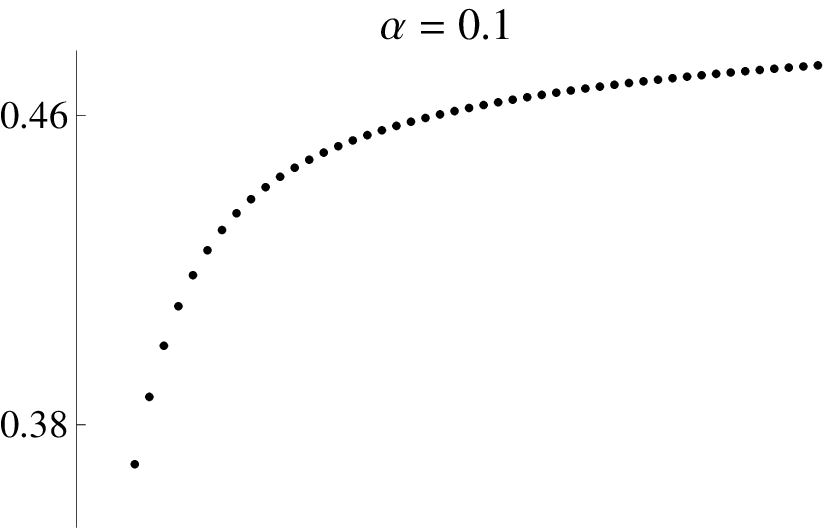}}

\subfloat[]{\label{fig(c)}
\includegraphics[width=0.32\textwidth]{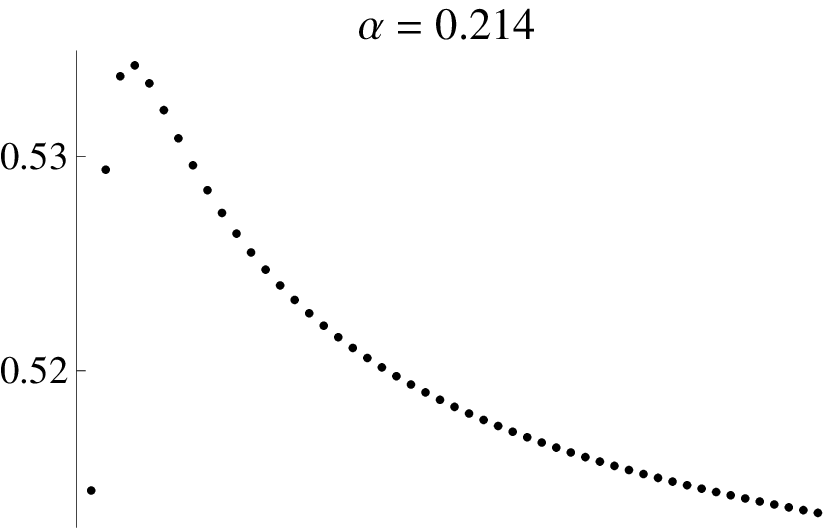}
\includegraphics[width=0.32\textwidth]{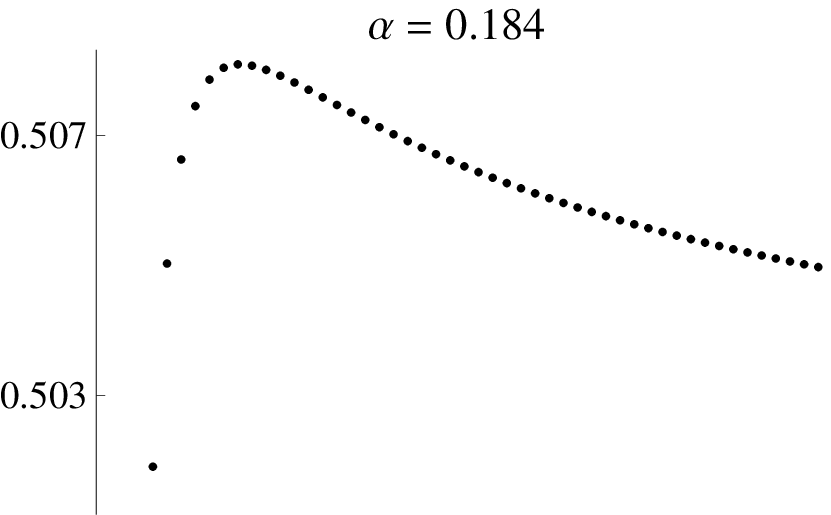}
\includegraphics[width=0.32\textwidth]{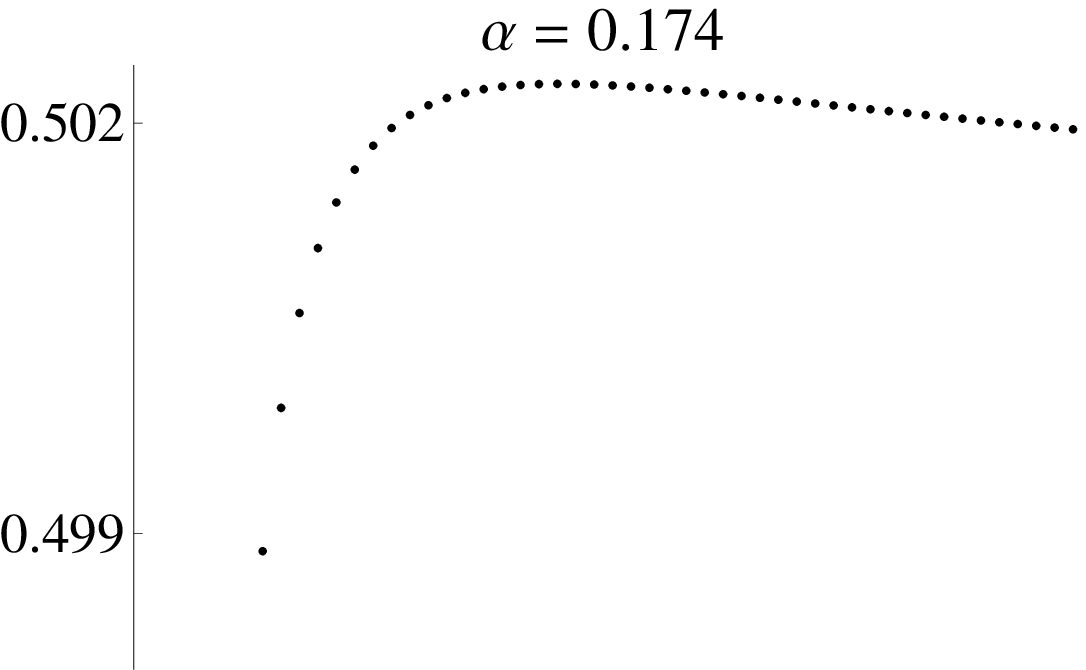}}
\caption{The sequence $p_n^{3, 1}$, $0 \le n \le 50$: monotone modes (a) and (b) and the phase transition (c) between them with decrease of $\alpha$. In the transitional mode (c) the point of maximum drifts to the right.} \label{Fig 2}
\end{figure}

We observe the peculiar ``phase transition'' in the behavior of $p_n^{r, 1}$ with decrease of $\alpha$, see Fig.~\ref{Fig 2}. For large $\alpha$ the sequence is decreasing while as $\alpha$ gets smaller reaching the critical value $1/(r+1)$, $p_n^{r, 1}$ becomes unimodal and the point of maximum drifts to the right as $\alpha$ decays to $1/(2r)$, as shown in Fig.~\ref{fig(c)}. When $\alpha$ reaches the critical value $1/(2r)$, $p_n^{r, 1}$ becomes increasing, which corresponds to the limit case $N_{\alpha, r}  = \infty$. It is indeed remarkable that in the transitional mode the sequence $p_n^{r, 1}$ is unimodal. Relations \eqref{mode} and \eqref{maximum} of Theorem~\ref{MAIN 3} describe the ``speed'' of phase transition near the critical value $\alpha=1/(2r)$.

For the general setting as parameters $\alpha, d, r$ vary, we have the following slightly less precise results
which still capture the phase transition phenomenon between the monotone modes of convergence.

\begin{theo} \label{MAIN 2}
Depending on the relations between $r$ and $d$, the sequence $p_n^{r, d}$ is

\begin{enumerate}
\item increasing (for any $0 < \alpha <1$) if $1 \le r \le d-1$;

\item
\begin{enumerate}
\item increasing (for all $n$) when $0 < \alpha < d/(r+1)$,
\item increasing for large $n$ when $d/(r+1) \le \alpha < (2d-1)/(2 r)$,
\item decreasing for large $n$ when $(2d-1)/(2 r) \le \alpha < 1$,
\end{enumerate}
if $d \le r \le 2d-2$;

\item
\begin{enumerate}
\item increasing when $0 < \alpha < 1/2$,
\item identically equals $1/2$ when $\alpha = 1/2$,
\item decreasing when $1/2  < \alpha < 1$,
\end{enumerate}
if $r = 2d-1$.

\item
\begin{enumerate}
\item increasing for large $n$ when $0 <  \alpha \le (2d-1)/(2 r)$,
\item decreasing for large $n$ when $(2d-1)/(2 r)  < \alpha \le d/(r+1)$,
\item decreasing (for all $n$) when $d/(r+1) < \alpha <1$,
\end{enumerate}
if $r \ge 2d $.
\end{enumerate}
\end{theo}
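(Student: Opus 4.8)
The plan is to bypass the integral representation and reduce monotonicity to one transparent sign, using a one-step-back analysis. Passing from $n$ to $n+1$ adds one toss to each side, so with $V_n:=S_{n+r}-S_n'$ we have $V_{n+1}=V_n+\eta$, where $\eta:=X_{n+r+1}-X'_{n+1}$ takes the values $\pm1$ with probability $\alpha(1-\alpha)$ each and $0$ otherwise, independently of $V_n$. Conditioning on $\eta$ and telescoping gives the exact identity
\be
p_{n+1}^{r,d}-p_n^{r,d}=\alpha(1-\alpha)\bigl(\P\{V_n=d-1\}-\P\{V_n=d\}\bigr)=:\alpha(1-\alpha)\,\Delta_n .
\ee
I would then write $V_n=W_n+Y$ with $W_n:=S_n-S_n'$ symmetric and $Y\sim\mathrm{Bin}(r,\alpha)$ independent, and resum $\Delta_n$ by parts. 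Setting $u_i^{(n)}:=\P\{W_n=i\}-\P\{W_n=i+1\}$ and using $\P\{W_n=\ell\}=\sum_{i\ge|\ell|}u_i^{(n)}$, the indicator bookkeeping collapses to the key representation
\be\label{plan-rep}
\Delta_n=\sum_{i\ge0}u_i^{(n)}\,c_i,\qquad c_i:=\P\{Y=d-1-i\}-\P\{Y=d+i\}.
\ee
All the dependence on $n$ sits in the weights $u_i^{(n)}$, which are nonnegative because $W_n$, a difference of two i.i.d.\ binomials, is symmetric and log-concave, hence $\P\{W_n=i\}$ is nonincreasing for $i\ge0$; the $c_i$ depend only on $r,d,\alpha$, and $c_i=0$ once $i>\max(r-d,d-1)$, so \eqref{plan-rep} is a finite sum.

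The conceptual core is to show that the $c_i$ share one sign whenever the two critical thresholds point the same way. Since $c_i\le0\iff \alpha/(1-\alpha)\ge\beta_i$ with $\beta_i:=\bigl(\binom{r}{d-1-i}/\binom{r}{d+i}\bigr)^{1/(2i+1)}$, I would compare each $\beta_i$ to $\beta_0=d/(r-d+1)$, whose critical value of $\alpha$ is exactly $d/(r+1)$. Writing $\beta_i$ through the ratios $t_k:=\binom{r}{k-1}/\binom{r}{k}=k/(r-k+1)$ gives $(\beta_i/\beta_0)^{2i+1}=\prod_{s=1}^{i}t_{d-s}t_{d+s}/t_d^{2}$, and the one-line evaluation
\be
\frac{t_{d-s}t_{d+s}}{t_d^{2}}=\frac{(d^2-s^2)(r-d+1)^2}{d^2\bigl((r-d+1)^2-s^2\bigr)}
\ee
shows every factor is $\ge1$ exactly when $r\le 2d-1$ and $\le1$ exactly when $r\ge 2d-1$. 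Thus $\beta_i\ge\beta_0$ for all $i$ when $r\le2d-1$, and $\beta_i\le\beta_0$ when $r\ge2d-1$. Consequently, for $r\le2d-1$ and $\alpha\le d/(r+1)$ every $c_i\ge0$, so $\Delta_n\ge0$ and $p_n^{r,d}$ increases for all $n$ (cases 1, 2(a), and the left half of case 3); for $r\ge2d-1$ and $\alpha\ge d/(r+1)$ every $c_i\le0$, giving a decrease for all $n$ (case 4(c) and the right half of case 3). The borderline $r=2d-1$ makes all factors equal $1$, so the $c_i$ flip sign together at $\alpha=1/2$, yielding the trichotomy of case 3, with $\Delta_n\equiv0$ at $\alpha=1/2$ (also visible from $V_n\overset{d}{=}r-V_n$).

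For the sub-cases with mixed-sign $c_i$ (2(b),2(c),4(a),4(b)) I would read off only the large-$n$ sign from \eqref{plan-rep}. As the sum is finite, it is enough to insert the local limit asymptotics $u_i^{(n)}\sim(2i+1)/(2\sqrt{2\pi}\,\sigma_n^{3})$, $\sigma_n^2=2n\alpha(1-\alpha)$, valid for each fixed $i$, which gives
\be
\Delta_n\sim\frac{1}{2\sqrt{2\pi}\,\sigma_n^{3}}\sum_{i\ge0}(2i+1)c_i=\frac{(2d-1)-2r\alpha}{2\sqrt{2\pi}\,\sigma_n^{3}},
\ee
the identity $\sum_i(2i+1)c_i=-\E[2Y-(2d-1)]=(2d-1)-2r\alpha$ being immediate and consistent with \eqref{sqrta}. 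Hence $p_n^{r,d}$ increases for large $n$ when $\alpha<(2d-1)/(2r)$ and decreases when $\alpha>(2d-1)/(2r)$, settling all four sub-cases on the open intervals.

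The hard part will be the closed endpoint $\alpha=(2d-1)/(2r)$ occurring in 2(c) and 4(a), where the leading coefficient $\sum_i(2i+1)c_i$ vanishes and the sign of $\Delta_n$ is decided at the next order. Here I would expand $u_i^{(n)}$ one step further; by symmetry of $W_n$ the Edgeworth (fourth-cumulant) correction is of lower order than the Gaussian $\sigma_n^{-5}$ term, and one is left with
\be
\Delta_n\sim-\frac{1}{16\sqrt{2\pi}\,\sigma_n^{5}}\sum_{i\ge0}(2i+1)^3c_i=\frac{\E\bigl[(2Y-(2d-1))^3\bigr]}{16\sqrt{2\pi}\,\sigma_n^{5}} .
\ee
At $\alpha=(2d-1)/(2r)$ the variable $2Y-(2d-1)$ is centered, so its third moment is $8$ times the third central moment of $Y$, namely $8r\alpha(1-\alpha)(1-2\alpha)$, whose sign equals that of $1-2\alpha=(r-2d+1)/r$. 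This gives a strict increase for large $n$ when $r\ge2d$ and a strict decrease when $r\le2d-2$, exactly as claimed, the two regimes meeting at $r=2d-1$. Verifying the uniformity of these fixed-$i$ local expansions over the finite index set, and the vanishing of the intermediate corrections, is the only genuinely delicate bookkeeping; everything else is the sign lemma for $\{c_i\}$ and the finite-sum asymptotics.
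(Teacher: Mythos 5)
Your argument is correct in substance and reaches every case of the theorem, but it is organized differently enough from the paper's proof that a comparison is worth recording. The opening identity $p_{n+1}^{r,d}-p_n^{r,d}=\alpha(1-\alpha)\bigl(\P\{V_n=d-1\}-\P\{V_n=d\}\bigr)$ is exactly the paper's Lemma~\ref{DIFF p_n^r,d 1}, and after the reindexing $i\mapsto d-1-i$ your coefficients $c_i$ are precisely the paired differences $\P\{S_r=i\}-\P\{S_r=2d-i-1\}$ that the paper isolates, while your factor $t_{d-s}t_{d+s}/t_d^2$ encodes the same rational function of $s=d-i-1$ as the paper's ratio $a_{i+1}/a_i$. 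What you do differently: (i) the Abel summation against the nonnegative weights $u_i^{(n)}$ lets you handle the ``monotone for all $n$'' regimes on both sides of $\alpha=d/(r+1)$ symmetrically and in one stroke, whereas the paper proves only the decreasing cases (3c),(4c) directly and deduces the increasing ones from the duality $p_n^{r,d}(\alpha)=1-p_n^{r,\,r-d+1}(1-\alpha)$; (ii) for the large-$n$ sign you replace the Fourier/Chebyshev machinery (the values $P_\alpha^{r,d}(1)=2\alpha r-2d+1$ and the derivative formula \eqref{pard'(1)}) by local-limit asymptotics of $u_i^{(n)}$, and your identities $\sum_i(2i+1)c_i=(2d-1)-2r\alpha$ and $\sum_i(2i+1)^3c_i=-\E(2Y-(2d-1))^3$ reproduce the paper's quantities; evaluating the latter at $\alpha=(2d-1)/(2r)$ via the third central moment $r\alpha(1-\alpha)(1-2\alpha)$ is genuinely cleaner than the paper's positivity check of \eqref{pard'(1)}. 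I verified your constants in both asymptotic formulas; they are right.

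Two points need repair. First, your justification of the second-order step at $\alpha=(2d-1)/(2r)$ is not quite the true mechanism: symmetry of $W_n$ kills the third-cumulant (order $\sigma_n^{-4}$) correction, but the fourth-cumulant correction to $u_i^{(n)}$ is of order $\sigma_n^{-5}$, the \emph{same} order as the term you keep. It is proportional to $(2i+1)$, so it disappears from $\Delta_n$ only because it enters multiplied by $\sum_i(2i+1)c_i$, which vanishes at the critical $\alpha$ --- a cancellation you must invoke explicitly. The cleanest rigorous route to your expansion is the exact formula $u_i^{(n)}=\pi^{-1}\int_0^\pi\varphi_\alpha^n(t)\bigl(\cos it-\cos(i+1)t\bigr)\,dt$ (the paper's Lemma~\ref{DIFF p_n^r,d 2} specialized to a single difference) followed by Watson's lemma, which makes both cancellations transparent. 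Second, log-concavity of $W_n$ only gives $u_i^{(n)}\ge 0$; for the strict monotonicity asserted in the theorem you need at least one strictly positive weight multiplying a nonzero $c_i$, e.g.\ $u_0^{(n)}=\sum_k\P\{S_n=k\}^2-\sum_k\P\{S_n=k\}\P\{S_n=k+1\}>0$ by Cauchy--Schwarz (the paper obtains strictness of $q_n^{(i)}>q_n^{(i+1)}$ by induction). With these two adjustments your outline is a complete and somewhat more self-contained proof than the paper's, at the cost of a more delicate asymptotic step at the boundary value of $\alpha$.
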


\begin{figure}[ht]
\centering
\includegraphics[width=0.32\textwidth]{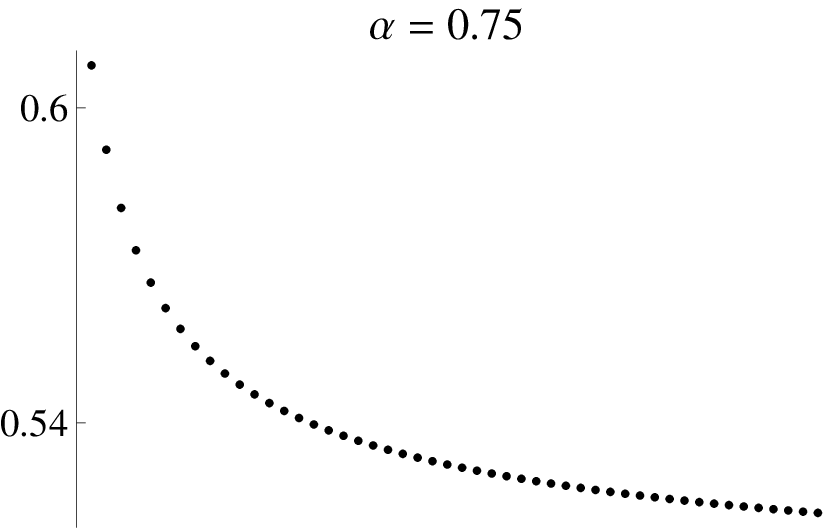}
\includegraphics[width=0.32\textwidth]{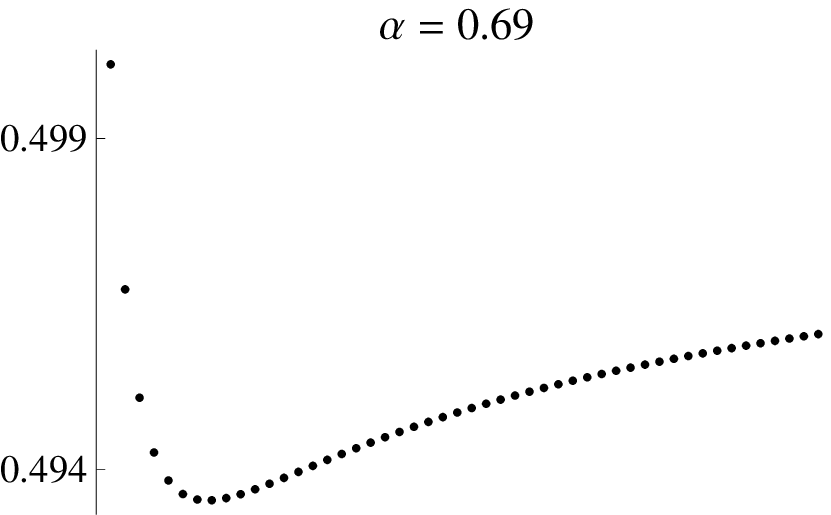}
\includegraphics[width=0.32\textwidth]{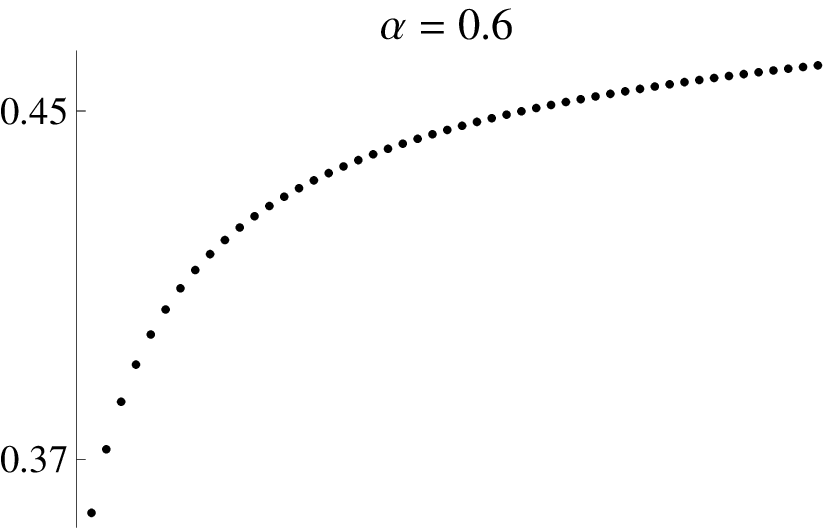}
\caption{Phase transition in the behavior of $p_n^{5, 4}$, $0 \le n \le 50$, with decrease of $\alpha$.} \label{Fig 3}
\end{figure}

Part 1 is in some sense degenerate while Part 2 uncovers another type of behavior of $p_n^{r, d}$, see Fig.~\ref{Fig 3}. However, the reader may concentrate on the case $r \ge 2d-1$, which is already familiar from Theorems~\ref{MAIN TH-2}~and~\ref{MAIN 3}, as all the results Part 2 for $d \le r \le 2d-2$ follow from Part 4 by a certain duality relation given in Section~\ref{PROOFS}.

\section{One-step-back analysis and Fourier method} \label{ANALYSIS}

It is natural to look into the increments of $p_n^{r,d}$ in $n$. We start with some elementary but very useful consideration, one-step-back analysis.

\begin{lem} \label{DIFF p_n^r,d 1} For any $r, d \ge 1$,
$$p_{n+1}^{r , d } - p_n^{r , d } = \alpha (1-\alpha) \sum_{i=0}^r \P \bigl \{ S_r = i \bigr \} \bigl( q_n^{(i-d+1)} - q_n^{(i-d)} \bigr)$$
where the sequence, defined by
$$q_n^{(i)} := \P \bigl \{ S_n' - S_n =i \bigr\},$$
is decreasing in $i$ for $i \ge 0$ and satisfies $q_n^{(-i)} = q_n^{(i)}$.
\end{lem}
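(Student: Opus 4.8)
The plan is to establish the increment formula by a one-step-back decomposition and then handle the two stated properties of $q_n^{(i)}$ separately. First I would peel off the last $r$ of the $n+r$ unprimed tosses: writing $S_{n+r}=S_n+\hat S_r$ with $\hat S_r:=X_{n+1}+\cdots+X_{n+r}$ independent of $S_n$ and distributed as $S_r$, and conditioning on $\hat S_r=i$, one gets
\[
p_n^{r,d}=\sum_{i=0}^r \P\{S_r=i\}\,\P\{S_n'-S_n\le i-d\}=\sum_{i=0}^r \P\{S_r=i\}\,F_n(i-d),
\]
where $Z_n:=S_n'-S_n$ and $F_n(k):=\P\{Z_n\le k\}$. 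The point of this step is that $i$ and $d$ no longer interact with $n$, so the entire $n$-dependence is carried by $F_n$.

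The second step is to compute how $F_n$ changes with $n$. Here I would use that $Z_{n+1}=Z_n+\eta$, where $\eta:=X_{n+1}'-X_{n+1}$ is independent of $Z_n$ and takes the values $-1,0,1$ with probabilities $\alpha(1-\alpha)$, $1-2\alpha(1-\alpha)$, $\alpha(1-\alpha)$. Conditioning on $\eta$ yields $F_{n+1}(k)=\alpha(1-\alpha)F_n(k-1)+(1-2\alpha(1-\alpha))F_n(k)+\alpha(1-\alpha)F_n(k+1)$, so the second difference turns the CDF back into the pmf:
\[
F_{n+1}(k)-F_n(k)=\alpha(1-\alpha)\big(F_n(k-1)-2F_n(k)+F_n(k+1)\big)=\alpha(1-\alpha)\big(q_n^{(k+1)}-q_n^{(k)}\big).
\]
Substituting $k=i-d$ into $p_{n+1}^{r,d}-p_n^{r,d}=\sum_{i=0}^r \P\{S_r=i\}\bigl(F_{n+1}(i-d)-F_n(i-d)\bigr)$ gives the claimed identity at once.

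It remains to prove the two properties of $q_n^{(i)}=\P\{Z_n=i\}$. Symmetry is immediate: since the primed and unprimed coins are i.i.d., $(S_n,S_n')$ and $(S_n',S_n)$ have the same law, hence $Z_n\stackrel{d}{=}-Z_n$ and $q_n^{(-i)}=q_n^{(i)}$. For monotonicity I would induct on $n$ using the same convolution recursion $q_{n+1}^{(i)}=\alpha(1-\alpha)q_n^{(i-1)}+(1-2\alpha(1-\alpha))q_n^{(i)}+\alpha(1-\alpha)q_n^{(i+1)}$, which gives
\[
q_{n+1}^{(i)}-q_{n+1}^{(i+1)}=\alpha(1-\alpha)\Delta_n(i-1)+(1-2\alpha(1-\alpha))\Delta_n(i)+\alpha(1-\alpha)\Delta_n(i+1),
\]
where $\Delta_n(j):=q_n^{(j)}-q_n^{(j+1)}$. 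For $i\ge 1$ all three arguments $i-1,i,i+1$ are nonnegative, so the induction hypothesis makes each term nonnegative with at least one strictly positive, and the vanishing of $q_n$ beyond $\pm n$ takes care of the support edge.

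The main obstacle is the boundary case $i=0$, where symmetry forces $\Delta_n(-1)=q_n^{(-1)}-q_n^{(0)}=q_n^{(1)}-q_n^{(0)}=-\Delta_n(0)$, so the first term is negative. Here I would combine it with the second term to rewrite the expression as $(1-3\alpha(1-\alpha))\Delta_n(0)+\alpha(1-\alpha)\Delta_n(1)$ and observe that $1-3\alpha(1-\alpha)>0$ for every $\alpha\in(0,1)$ since $\alpha(1-\alpha)\le 1/4$; both terms are then nonnegative with the first strictly positive, which closes the induction. The base case $n=1$ (or $n=0$) is a direct check, and strictness is naturally tracked on the support $\{0,\dots,n\}$, the sequence being forced to a flat zero tail beyond $\pm n$.
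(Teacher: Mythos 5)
Your proof is correct and follows essentially the same route as the paper's: a one-step-back decomposition for the increment identity and induction on the convolution recursion $q_{n+1}^{(i)}=\alpha(1-\alpha)\bigl(q_n^{(i-1)}+q_n^{(i+1)}\bigr)+(1-2\alpha(1-\alpha))q_n^{(i)}$ for monotonicity, with the same key constant $1-3\alpha(1-\alpha)>0$ appearing at the boundary. The only (harmless) differences are organizational: you place the second difference on the distribution function of $S_n'-S_n$ rather than on the tail of $S_r$, which saves the final summation by parts, and you treat $i=0$ as an explicit special case where the paper's grouping into the long difference $q_n^{(i-1)}-q_n^{(i+2)}$ absorbs it automatically.
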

\begin{proof}
We write
$$p_n^{r , d }=\P \bigl\{ S_{n+r} \ge S_n' + d \bigr\} = \P \bigl\{ S_r'' \ge S_n' - S_n + d \bigr\} = \sum_{i=-n}^n q_n^{(i)} \P \bigl \{ S_r \ge i + d \bigr \}$$ with $S_r'':=S_{n+r}-S_r$ and similarly,
$$p_{n+1}^{r , d }=\P \bigl\{ S_r'' \ge S_n' + X_{n+1}' - S_n - X_{n +1} + d  \bigr\} = \sum_{i=-n}^n q_n^{(i)} \P \bigl \{ S_r \ge X_1' - X_1'' + i + d \bigr \}.$$ Hence
\begin{eqnarray*}
p_{n+1}^{r , d } - p_n^{r , d } &=& \alpha (1-\alpha) \sum_{i=-n}^n q_n^{(i)} \Bigl (\P \bigl \{ S_r \ge i + d + 1 \bigr \} + \P \bigl \{ S_r \ge i + d - 1 \bigr \} - 2 \P \bigl \{ S_r \ge i + d  \bigr \} \Bigr) \\
&=& \alpha (1-\alpha) \sum_{i=-n}^n q_n^{(i)} \Bigl ( \P \bigl \{ S_r = i + d - 1 \bigr \} - \P \bigl \{ S_r = i + d \bigr \} \Bigr),
\end{eqnarray*}
which yields the first statement of the lemma.

The relation $q_n^{(-i)} = q_n^{(i)}$ is obvious. Let us use induction to show that $q_n^{(i)} > q_n^{(i+1)}$ for any $n$ and $0 \le i \le n$.
The initial case $n=1$ is trivial. We use the law of total probability to get from $n$ to $n+1$: $$q_{n+1}^{(i)} = \alpha (1-\alpha) \Bigl( q_n^{(i-1)} +  q_n^{(i+1)}\Bigr) + \bigl( \alpha^2 + (1-\alpha)^2 \bigr) q_n^{(i)}.$$
Then $$q_{n+1}^{(i)} - q_{n+1}^{(i+1)} = \alpha (1-\alpha) \Bigl( q_n^{(i-1)} -
q_n^{(i+2)}\Bigr) + \Bigl( \alpha^2 + (1-\alpha)^2 - \alpha(1-\alpha) \Bigr) \Bigl(
q_n^{(i)} - q_n^{(i+1)}\Bigr),$$ which is positive by the induction hypothesis.

\end{proof}

In order to gain more quantitative information, we employ the powerful Fourier method. Recall that for any integer-valued random variable $Z$, it holds that
\begin{equation}
\label{Fourier}
\P \bigl\{ Z=0\bigr\} = \frac{1}{2 \pi} \int_{-\pi}^{\pi} \E e^{itZ} dt.
\end{equation}
Indeed, $$\int_{-\pi}^{\pi} \E e^{itZ} dt = \int_{-\pi}^{\pi} \sum_{k=-\infty}^{\infty} e^{itk} \P \bigl\{ Z=k\bigr\} dt = \sum_{k=-\infty}^{\infty} \int_{-\pi}^{\pi} e^{itk} dt \P \bigl\{ Z=k\bigr\} = 2 \pi \P \bigl\{ Z=0\bigr\}.$$

\begin{lem} \label{DIFF p_n^r,d 2} For any $r, d \ge 1$,
$$p_{n+1}^{r , d } - p_n^{r , d } = -\frac{4 \alpha (1-\alpha)}{\pi} \int_0^1 Q_\alpha^n(x) \sqrt{1-x^2} \tilde{P}_{\alpha}^{r, d} (x) dx,$$
where
\begin{equation} \label{P tilda}
\tilde{P}_{\alpha}^{r, d}(x):= \sum_{i=d}^r \P \bigl \{ S_r = i \bigr \} U_{2(i-d)}(x) - \sum_{i=0}^{d-1} \P \bigl \{ S_r = i \bigr \} U_{2(d-i-1)}(x)
\end{equation}
is defined in terms of Chebyshev polynomials $U_k(x)$ of the second kind.

\end{lem}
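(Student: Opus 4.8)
The plan is to take the one-step difference from Lemma~\ref{DIFF p_n^r,d 1} as the starting point and to evaluate the quantities $q_n^{(i-d+1)} - q_n^{(i-d)}$ via the Fourier inversion formula \eqref{Fourier}. Writing $q_n^{(k)} = \P\{ S_n' - S_n - k = 0 \}$ and applying \eqref{Fourier} with $Z = S_n' - S_n - k$, independence gives
$$q_n^{(k)} = \frac{1}{2\pi}\int_{-\pi}^{\pi} e^{-itk}\, \E e^{itS_n'}\, \E e^{-itS_n}\, dt.$$
Since $S_n$ and $S_n'$ are binomial, $\E e^{itS_n'}\,\E e^{-itS_n} = \bigl|1-\alpha+\alpha e^{it}\bigr|^{2n}$, and a short computation yields $\bigl|1-\alpha+\alpha e^{it}\bigr|^2 = 1 - 2\alpha(1-\alpha)(1-\cos t)$. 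Under the substitution $x = \cos(t/2)$ one has $1-\cos t = 2(1-x^2)$, so this quantity is exactly $Q_\alpha(x)$ from \eqref{Qa}. Folding the integral onto $[0,\pi]$ using that the integrand is even in $t$ (which discards the imaginary part of $e^{-itk}$), I obtain the clean representation
$$q_n^{(k)} = \frac{1}{\pi}\int_0^\pi \cos(kt)\, Q_\alpha^n\bigl(\cos(t/2)\bigr)\, dt.$$

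Next I would reduce the difference of shifted terms to Chebyshev polynomials. Writing $m = i-d$, the difference-to-product identity gives $\cos((m+1)t) - \cos(mt) = -2\sin\bigl((2m+1)\tfrac{t}{2}\bigr)\sin\tfrac{t}{2}$, valid for every integer $m$. Setting $\theta = t/2$ and using the defining relation $U_k(\cos\theta) = \sin((k+1)\theta)/\sin\theta$, the factor $\sin((2m+1)\theta)$ equals $U_{2m}(x)\sin\theta$ when $m\ge 0$, whereas for $m<0$ the oddness of the sine turns it into $-U_{2(-m-1)}(x)\sin\theta$. In either case the two sine factors combine through $\sin^2\theta = 1-x^2$, so that $\cos((m+1)t)-\cos(mt)$ equals $-2U_{2m}(x)(1-x^2)$ for $m\ge 0$ and $+2U_{2(-m-1)}(x)(1-x^2)$ for $m<0$. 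This sign split, according to whether $i\ge d$ or $i\le d-1$, is the crux of the argument and is precisely what produces the two separate sums in the definition \eqref{P tilda} of $\tilde P_\alpha^{r,d}$; I expect pinning down the Chebyshev indices and the sign for the negative range (where $-m-1 = d-i-1$) to be the main bookkeeping obstacle.

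Finally I would substitute $x = \cos(t/2)$ in each $q_n^{(i-d+1)} - q_n^{(i-d)}$, with $dt = -2(1-x^2)^{-1/2}\,dx$ and $t\in[0,\pi]$ mapping to $x\in[0,1]$, so that the factor $(1-x^2)$ coming from the trigonometric reduction and the Jacobian factor $(1-x^2)^{-1/2}$ collapse to $\sqrt{1-x^2}$. For $i\ge d$ this turns $q_n^{(i-d+1)}-q_n^{(i-d)}$ into $-\tfrac{4}{\pi}\int_0^1 U_{2(i-d)}(x)\sqrt{1-x^2}\,Q_\alpha^n(x)\,dx$, and for $i\le d-1$ into $+\tfrac{4}{\pi}\int_0^1 U_{2(d-i-1)}(x)\sqrt{1-x^2}\,Q_\alpha^n(x)\,dx$. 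Substituting these back into Lemma~\ref{DIFF p_n^r,d 1}, pulling out the common constant $-4\alpha(1-\alpha)/\pi$, and moving the $x$-independent weights $\P\{S_r=i\}$ inside a single integral, the bracketed sum over $i$ assembles exactly into $\tilde P_\alpha^{r,d}(x)$, completing the proof. Beyond the index bookkeeping, the only points needing justification are the interchange of sum and integral in \eqref{Fourier}, which is immediate since all the sums involved are finite.
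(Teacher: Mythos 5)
Your proposal is correct and follows essentially the same route as the paper: Fourier inversion for $q_n^{(k)}$ giving $\frac{1}{\pi}\int_0^\pi \varphi_\alpha^n(t)\cos kt\,dt$, the cosine difference-to-product identity converted into Chebyshev polynomials $U_{2m}$ with the sign split at $i=d$ producing the two sums in \eqref{P tilda}, and the substitution $x=\cos(t/2)$ merging the $\sin^2(t/2)$ factor with the Jacobian into $\sqrt{1-x^2}$. The only cosmetic difference is that you change variables term by term before reassembling the sum, and you spell out the negative-index case of the identity that the paper leaves implicit.
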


Recall that Chebyshev polynomials of the second kind, defined by $U_k(x)=\frac{\sin(k+1) \theta}{\sin \theta}$ with $x=\cos \theta$, are orthogonal on $[-1,1]$ with
weight $\sqrt{1-x^2}$. It is remarkable that $\tilde{P}_{\alpha}^{r, d} (x)$ is explicitly expressed in terms of $U_k(x)$, whose properties are well known.
In the next section we will show that $\tilde{P}_{\alpha}^{r, d} (x) = P_{\alpha}^{r, d} (x) $, and thus \eqref{P tilda} actually serves as another useful representation of the polynomials defined in Theorem~\ref{MAIN TH-1}.


\begin{proof}

With the reminder $q_n^{(k)} = \P \bigl \{ S_n' - S_n =k \bigr\}$, we use \eqref{Fourier} to write
\begin{eqnarray*}
q_n^{(k)}  &=& \frac{1}{2 \pi} \int_{-\pi}^{\pi} \E e^{it(S_n - S_n' - k)} dt = \frac{1}{2 \pi} \int_{-\pi}^{\pi} \varphi_\alpha^n(t) e^{-ikt}dt,
\end{eqnarray*}
where
\begin{eqnarray} \label{phi=}
\varphi_\alpha(t)&:=& \bigl(1-\alpha + \alpha e^{it}\bigr)\bigl(1-\alpha + \alpha e^{-it}\bigr) \notag\\
&=& 1 - 4 \alpha (1-\alpha) \sin^2 \frac{t}{2}
\end{eqnarray}
is the characteristic function of $S_1-S_1'$. By the symmetry of $\varphi_\alpha(t)$,
\begin{equation} \label{q_n=}
q_n^{(k)} = \frac{1}{\pi} \int_0^{\pi} \varphi_\alpha^n(t) \cos kt \, dt.
\end{equation}
and combining \eqref{q_n=} with Lemma~\ref{DIFF p_n^r,d 1}, we get
\begin{equation} \label{diff p_n^r,d cos}
p_{n+1}^{r , d } - p_n^{r , d } = \frac{\alpha (1-\alpha)}{\pi} \int_0^\pi
\varphi_\alpha^n(t) \sum_{i=0}^r \P \bigl \{ S_r = i \bigr \} \bigl( \cos (i-d+1)t - \cos
(i-d)t \bigr) d t.
\end{equation}
As for $k \ge 0$ it holds that
$$\cos (k+1) t - \cos kt = -2 \sin^2 \frac{t}{2} \cdot \frac{\sin (k+1/2)t}{\sin (t/2)} = - 2
\sin^2 \frac{t}{2} \cdot U_{2k} \Bigl (\cos \frac{t}{2} \Bigr),$$ \eqref{diff p_n^r,d cos} transforms to
$$p_{n+1}^{r , d } - p_n^{r , d } = -\frac{2 \alpha (1-\alpha)}{\pi} \int_0^{\pi}
\varphi_\alpha^n(t) \sin^2 \frac{t}{2} \cdot \tilde{P}_{\alpha}^{r, d} \Bigl(\cos \frac{t}{2}\Bigr) dt,$$
and we conclude the proof with the change $x= \cos (t/2)$.
\end{proof}

\section{Proofs of the theorems.} \label{PROOFS}

\begin{proof}[\bf Proof of Theorem~\ref{MAIN TH-1}]
First check that
\begin{equation} \label{P=P_tilde}
\tilde{P}_{\alpha}^{r, d} (x) = P_{\alpha}^{r, d} (x).
\end{equation}
Using the representation
$$U_{2k}(x)= \sum_{j=0}^k (-1)^j {2k-j \choose j} (2x)^{2k-2j} = \sum_{j=0}^k (-1)^{k-j} {k+j \choose 2j} (2x)^{2j}$$
from Section 6.10.7 of \cite{Z}, we obtain
\begin{eqnarray*}
\tilde{P}_\alpha^{r, d}(x) &=& \sum_{i=d}^{r} \sum_{j=0}^{i-d} \P \bigl \{ S_r = i \bigr \} (-1)^{i-j-d} {i+j-d \choose 2j} (2x)^{2j}\\
&& -\sum_{i=0}^{d-1} \sum_{j=0}^{d-i-1} \P \bigl \{ S_r = i \bigr \} (-1)^{d-i-j-1} {d-i+j-1 \choose 2j} (2x)^{2j}.\\
\end{eqnarray*}

Let us agree that ${n \choose k} := \frac{1}{k!} \prod_{m=n-k+1}^n m$ and ${n \choose 0}
:=1$ to write binomial coefficients with negative $n$. This allows us to note
that ${d-i+j-1 \choose 2j} = {i+j-d \choose 2j}$ for any $i, j, d \ge 0$. The next
important observation is that in both double sums the summation in $j$ could be taken
from $0$ to $\infty$. Indeed, any $j \ge i - d +1$ gives no contribution to the first sum as
the product $\prod_{m=i-j-d+1}^{i+j-d} m$, which corresponds to the binomial coefficient, includes a zero factor because
$i+j-d > 0$ and $i-j-d+1 \le 0$. The same applies to the second double sum for $j \ge d-i$ as the product $\prod_{m=d-i-j}^{d-i+j-1} m$ is zero since $d-i+j-1 >0$ while $d-i-j \le 0$. Thus
\begin{eqnarray*}
\tilde{P}_\alpha^{r, d}(x) &=& \sum_{i=0}^r \sum_{j=0}^\infty  \P \bigl \{ S_r = i
\bigr \} (-1)^{i-j-d} {i+j-d \choose 2j} (2x)^{2j}\\
&=&  \sum_{j=0}^{\infty} \frac{d^{2j}}{d t^{2j}} \Bigl( \E t^{S_r + j-d} \Bigr)
\Bigl. \Bigr |_{t=-1} \cdot \frac{(2x)^{2j}}{(2j)!}.
\end{eqnarray*}
Then \eqref{P=P_tilde} follows since the generating function of $S_r$ is $\E t^{S_r} = (1 - \alpha + \alpha t)^r$.

It now remains to use Lemma~\ref{DIFF p_n^r,d 2} and \eqref{P tilda} to write the telescoping sum
$$p_{n+k}^{r , d } - p_n^{r , d } = \sum_{i=0}^{k-1} p_{n+i+1}^{r , d } - p_{n+i}^{r , d } = -\frac{4 \alpha (1-\alpha)}{\pi} \int_0^1 \sum_{i=0}^{k-1} Q_\alpha^{n+i}(x) \sqrt{1-x^2} P_{\alpha}^{r, d} (x) dx.$$ As $k \to \infty$, $p_{n+k}^{r , d } \to 1/2$ by \eqref{clt 1/2}, and simplifying the sum of the geometric series with \eqref{Qa}, we get \eqref{f12} by the dominated convergence theorem.

For the precise asymptotic relation (\ref{sqrta}), we return to the convenient trigonometric substitution $x=\cos (t/2)$ in \eqref{f12} and get
$$p_n^{r , d } - \frac12 = \frac{1}{2\pi}\int_0^\pi \varphi_\alpha^n(t)  P_{\alpha}^{r, d} \Bigl(\cos \frac{t}{2}\Bigr) dt,$$ with $\varphi_\alpha(t) = Q_\alpha( \cos (t/2))=1-4\alpha(1-\alpha)\sin^2 (t/2)$. Choose a $\delta>0$ such that $t/4 \le \sin (t/2) \le t/2$ on $[0, \delta]$ and observe that $\varphi_\alpha(t)$ is decreasing on $[0, \pi]$ and $\varphi_\alpha(0)=1$. We have
\bea
\int_0^{\pi} \varphi_\alpha^n(t)  P_{\alpha}^{r, d} \Bigl(\cos \frac{t}{2}\Bigr) dt
&=& \int_0^{\delta} \varphi_\alpha^n(t)  P_{\alpha}^{r, d} \Bigl(\cos \frac{t}{2}\Bigr) dt + O \bigl( \varphi_\alpha^n(\delta) \bigr) \notag\\
&=& \frac{1}{\sqrt{n}} \int_0^{\delta \sqrt{n}} \varphi_\alpha^n \Bigl(\frac{s}{\sqrt{n}}\Bigr)  P_{\alpha}^{r, d} \Bigl(\cos \frac{s}{2\sqrt{n}}\Bigr) ds + O \bigl( \varphi_\alpha^n(\delta) \bigr) \label{dif sim},
\eea
where
\begin{equation} \label{phi lim}
\lim_{n \to \infty} \varphi_\alpha^n \Bigl(\frac{s}{\sqrt{n}}\Bigr) = \lim_{n \to \infty} \left( 1-4\alpha(1-\alpha)\sin^2 \frac{s}{2 \sqrt{n}} \right)^n  = e^{-\alpha(1-\al)s^2}.
\end{equation}
Now
\bea
\lim_{n \to \infty} \int_0^\infty \I_{[0, \delta \sqrt{n}]}(s) \varphi_\alpha^n \Bigl(\frac{s}{\sqrt{n}}\Bigr)  P_{\alpha}^{r, d} \Bigl(\cos \frac{s}{2\sqrt{n}}\Bigr)  ds
&=& \int_0^\infty e^{-\alpha(1-\alpha)s^2} P_{\alpha}^{r, d} (1)ds \label{Lebesgues}\\
&=& \frac12 \sqrt{\frac{\pi}{\alpha(1-\alpha)}}P_{\alpha}^{r, d} (1) \notag
\eea
by the dominated convergence theorem, with $e^{-\alpha(1-\alpha)s^2/4}\cdot \sup_{0 \le x \le 1}|P_\alpha^{r,d}(x)|$ as an integrable majorant. Combining the arguments above together, we finish the proof of Theorem 1 once we find $P_{\alpha}^{r, d} (1)$.

It is well known  that $U_k(1)=k+1$, hence \eqref{P tilda} and \eqref{P=P_tilde} imply
\bea
P_\alpha^{r, d}(1)  &=& \sum_{i=d}^r \P \bigl \{ S_r = i \bigr \} (2i-2d +1) -\sum_{i=0}^{d-1} \P \bigl \{ S_r = i \bigr \} (2d-2i-1)\nonumber \\
&=& \E (2S_r-2d+1)=2\alpha r - 2d + 1. \label{pard(1)}
\eea

\end{proof}

\begin{proof}[\bf Proof of Theorem~\ref{MAIN TH-2}]
As $P_\alpha^{1,1}(x)=2\alpha - 1$, Lemma~\ref{DIFF p_n^r,d 2} yields $$p_{n+1} - p_n = \frac{4\alpha (1-\alpha)(1 - 2 \alpha)}{\pi} \int_0^1 Q_\alpha^n(x) \sqrt{1-x^2} dx.$$ It is readily seen that $p_n$ is monotone because the
integrand is nonnegative; moveover, the integrand is monotone in $n$ for each $x$, so
$p_{n+1} - p_n$ is monotone implying convexity of $p_n$.
\end{proof}

Note that monotonicity of $p_n$ could be obtained directly from Lemma~\ref{DIFF p_n^r,d 1}, whose proof requires only an elementary one-step-back analysis. The same is true for convexity of $p_n$ but some additional study of properties of $q_n^{(i)}$ should be done.

It is also worth mentioning that the asymptotic of $p_n - 1/2$ could be found via purely probabilistic argument with no use of Fourier method. Indeed, the trivial identities
$1=2 \P \bigl \{ S_n  > \tilde{S}_n \bigr\} + \P \bigl \{ S_n  = \tilde{S}_n \bigr\}$ and
$\P \bigl \{ S_{n+1}  > \tilde{S}_n \bigr\} = \alpha \P \bigl \{ S_n  = \tilde{S}_n \bigr\} + \P \bigl \{ S_n  > \tilde{S}_n \bigr\}$ imply
\begin{equation} \label{p_n= q_n}
p_n = \frac12 + \frac{2\alpha-1}{2} \P \bigl \{ S_n  = \tilde{S}_n  \bigr\}.
\end{equation}
The asymptotic of the probability in the right-hand side is given by the classical local limit theorem.

We now turn to the proof of Theorem~\ref{MAIN 2} as it partly covers Theorem~\ref{MAIN 3}.

\begin{proof}[\bf Proof of Theorem~\ref{MAIN 2}]

In the case $1 \le r \le d-1$, the result immediately follows from Lemma~\ref{DIFF p_n^r,d 1}.
In the case of $r \ge 2d - 1$, we start with the proof of Cases (3c) and (4c). By Lemma~\ref{DIFF p_n^r,d 1}, we write
\begin{eqnarray*}
p_{n+1}^{r , d } - p_n^{r , d } &=& \alpha (1-\alpha) \biggl( \sum_{i=0}^{d-1} \Bigl( \P \bigl \{ S_r = i \bigr \} - \P \bigl \{ S_r = 2d-i-1 \bigr \} \Bigr) \bigl( q_n^{(i-d+1)} - q_n^{(i-d)} \bigr) \\
&& + \sum_{i=2d}^r \P \bigl \{ S_r = i \bigr \} \bigl( q_n^{(i-d+1)} - q_n^{(i-d)} \bigr) \biggr).
\end{eqnarray*}
The second sum is non-positive. Let us show that
$$a_i:= \frac{\P \bigl \{ S_r = 2d-i-1 \bigr \}}{\P \bigl \{ S_r = i \bigr \}} > 1, \qquad  0 \le i \le d-1$$ when $\alpha > d/(r+1)$ to prove that the first sum is negative. This statement generalizes the well known result that the (last) maximum of binomial coefficients $\P \bigl \{ S_r = i \bigr \}$ occurs at $i= [(r+1) \alpha]$.

Consider the ratio $$\frac{a_{i+1}}{a_i} = \frac{ (1-\alpha)^2 (i+1)(2d-i-1) }{\alpha^2 (r-i) (r-2d+i+2)}$$
and rewrite it in the form
$$\Bigl (\frac{1-\alpha}{\alpha} \Bigr)^2 \frac{d^2 - (d-i-1)^2}{(r-d+1)^2 - (d - i - 1)^2} =
\Bigl (\frac{1-\alpha}{\alpha} \Bigr)^2 \Bigl ( 1 - \frac{(r-d+1)^2 - d^2}{(r-d+1)^2 - (d - i - 1)^2} \Bigr)$$
to observe that this quantity increases in $i$ for $0 \le i \le d-1$ when $r \ge 2d$ and is constant when $r=2d-1$. Now as $\alpha > d/(r+1)$, in both cases we have
$$\frac{a_{i+1}}{a_i} \le \frac{a_d}{a_{d-1}} = \Bigl (\frac{1-\alpha}{\alpha} \Bigr)^2 \frac{d^2}{(r-d+1)^2}
< 1,$$
and thus $a_i$ is decreasing. Then
$$a_i \ge a_{d-1} = \frac{\P \bigl \{ S_r = d \bigr \}}{\P \bigl \{ S_r = d - 1 \bigr \}} > 1$$ because
$\alpha > d/(r+1)$.

Next we present the proof of Cases (3a), (4a), and (4b). First note that $\alpha < d/(r+1) \le 1/2$. Then by Lemma~\ref{DIFF p_n^r,d 2}, the sign of $p_{n+1}^{r,d} - p_n^{r,d}$ for large $n$ is opposite to the sign of $\tilde{P}_\alpha^{r,d}(1)=P_\alpha^{r,d}(1)=2\alpha r -2d +1$, see \eqref{P=P_tilde} and \eqref{pard(1)}.

The case $\alpha = (2d-1)/(2r)$ requires more attention. Here $P_\alpha^{r,d}(1)=0$, and the sign of $p_{n+1}^{r,d} - p_n^{r,d}$ for large $n$ coincides with that of $\frac{\partial}{\partial x} P_\alpha^{r,d}(1)$. We use the formulae $T_n'(x)= n U_{n-1}(x)$ and $T_n''(1)=(n-1)n^2(n+1)/3$, where $T_n(x)=\cos( n \arccos x)$ are Chebyshev polynomials of the first kind, to find that $U_n'(1)=n(n+1)(n+2)/3$. Hence, arguing as in the proof of \eqref{pard(1)}, we obtain
$$\frac{\partial}{\partial x} P_\alpha^{r,d}(1) = \frac13 \E (2 S_r - 2d)(2 S_r - 2d+1)(2 S_r - 2d+2).$$

As the expectation in the right-hand side equals $$\frac{\partial^3}{\partial t^3} \bigl( \E t^{2 S_r - 2d +2} \bigr) \Bigl. \Bigr|_{t=1} = \frac{\partial^3}{\partial t^3} \bigl( \bigl(1-\alpha + \alpha t^2 \bigr)^r t^{2- 2d} \bigr) \Bigl. \Bigr|_{t=1},$$ we substitute $\alpha = (2d-1)/(2r)$ and after some simplifications get
\begin{equation} \label{pard'(1)}
\frac{\partial}{\partial x} P_{(2d-1)/(2r)}^{r,d}(1) = \frac{2(2d-1)(1 + 4 d^2 +3r + 2r^2- 2d(2+3r))}{3 r^2}.
\end{equation}
In order to check that this expression is positive, note that the partial derivative in $r$ of the last factor in the numerator is equal to $4 r + 3 - 6d >0$ as $r \ge 2d$. Hence the minimal value of this factor is attained at $r=2d$ and equals $2d+1 >0$.

In the case of $d \le r \le 2d-1$, we observe the duality relation
\begin{equation} \label{duality}
p_n^{r, d}(\alpha) = 1-p_n^{r, r-d+1}(1-\alpha),
\end{equation}
which follows from $$p_n^{r, d}(\alpha) = \P \bigl\{ S_{n+r} (\alpha) \ge S_n'(\alpha) + d \bigr\} = \P \bigl\{ n+r-S_{n+r} (\alpha) \le n-S_n'(\alpha) + r - d \bigr \}$$ by comparing tails.
Here we temporarily changed the notation to stress that $p_n^{r, d}$ is a function of $\alpha$.

Now Case (3b) follows immediately while \eqref{duality} implies that Case (2) is equivalent to Case (4), which was proved above. Indeed, if $d \le  r \le 2d -2$, then $r \ge 2 d'$, where $d':= r - d +1$; and if $r \ge 2d$, then $d' \le r \le 2d' - 2$. Thus the function $(r,d) \mapsto (r, r-d+1)$ is a bijection between the sets $\{(r,d): d \le r \le 2d-2 \}$ and $\{(r,d): r \ge 2d  \}$ while $1 - d/(r+1) = d'/(r+1)$ and $1 - (2d-1)/(2r) = (2d'-1)/(2r)$.

\end{proof}

\begin{proof}[\bf Proof of Theorem~\ref{MAIN 3}]

It is well known that the maximum of $U_k(x)$ on $[0,1]$ occurs at $x=1$. Hence \eqref{P tilda}, \eqref{P=P_tilde} and \eqref{pard(1)} imply that $P_\alpha^{r, 1}(x) \le P_\alpha^{r, 1}(1) = 2\alpha r - 1$ for $x \in [0,1]$, and by Lemma~\ref{DIFF p_n^r,d 2}, the sequence $p_n^{r, 1}$ is increasing when $0 < \alpha \le 1/(2r)$. The decrease of $p_n^{r, 1}$ for $1/(r+1) < \alpha < 1$ is already covered by Case (4c) of Theorem~\ref{MAIN 2}.

For a proof of unimodality in the transitional zone it suffices to check that $P_\alpha^{r, 1}(x)$ has only one root on $[0,1]$ when $1/(2r) < \alpha \le 1/(r+1)$. Indeed, let $P_\alpha^{r, 1}(x_0)=0$ for some $x_0 \in (0,1)$, and let $p_{k+1}^{r, 1} - p_k^{r, 1} \le 0$ for some $k \ge 0$. We claim that for any $n>k$ it holds that
\begin{equation} \label{Q>Q}
Q_\alpha^n(x) \sqrt{1-x^2} P_{\alpha}^{r, d} (x) > Q_\alpha^{n-k}(x_0) Q_\alpha^k(x) \sqrt{1-x^2} P_{\alpha}^{r, d} (x), \quad x \in [0,1] \setminus \{x_0\}.
\end{equation}
First observe from \eqref{Pa} that $P_\alpha^{r, 1}(0) = -(1-2\alpha)^r <0$ while $P_\alpha^{r, 1}(1) = 2\alpha r - 1 > 0$, so $P_\alpha^{r, 1}(x) <0$ on $(0, x_0)$ and $P_\alpha^{r, 1}(x) >0$ on $(x_0, 1)$. Then we get \eqref{Q>Q} using that $Q_\alpha(x)$ is positive and increasing in $x$ on $[0,1]$. Now integrate \eqref{Q>Q} over $[0,1]$ and apply Lemma~\ref{DIFF p_n^r,d 2} to get $p_{n+1}^{r, 1} - p_n^{r, 1} < Q_\alpha^{n-k}(x_0) (p_{k+1}^{r, 1} - p_k^{r, 1}) \le 0$ for all $n > k$. Thus unimodality follows from the uniqueness of root. Note that by Lemma~\ref{DIFF p_n^r,d 1}, $$p_1^{r , 1 } - p_0^{r , 1 } = \alpha (1-\alpha) \Bigl( \P \bigl \{ S_r = 0 \bigr \} - \P \bigl \{ S_r = 1 \bigr \} \Bigr) \ge 0$$ assuring the the mode satisfies $N_{\alpha, r} \ge 1$.

Clearly, we prove that $P_\alpha^{r, 1}(x)$ has only one root on $[0,1]$ when $1/(2r) < \alpha < 1/(r+1)$ if we check that $P_\alpha^{r, 1}(x)$ is increasing on $[0,1]$ when $\alpha \le 1/(r+1)$. We claim the stronger statement: with exception of the constant term, all coefficients of the polynomial $P_\alpha^{r, 1}(x)$ are positive when $\alpha \le 1/(r+1)$.

By Theorem~\ref{MAIN TH-1}, we should show that $\frac{d^{2j}}{d t^{2j}} \bigl( t^{j-1} (1 - \alpha + \alpha t )^r \bigr) \Bigl. \Bigr |_{t=-1}$ is positive for any $1 \le j \le r-1$. The Leibnitz formula gives
$$\frac{d^{2j}}{d t^{2j}} \bigl( t^{j-1} (1 - \alpha + \alpha t )^r \bigr) \Bigl. \Bigr |_{t=-1} = \sum_{k=j+1}^{2j \wedge r} (-1)^{k-j-1} a_k,$$ where $a_k:= {2j \choose k} \frac{r ! (j-1)!}{(r - k)! (k-j-1)!} \alpha^k (1 - 2 \alpha)^{r-k},$ and it suffices to prove that $a_k$ are decreasing. We have $$\frac{a_{k+1}}{a_k} = \frac{\alpha}{1-2\alpha} \frac{(r - k) (2j - k)}{(k-j)
(k+1)},$$ which is obviously decreasing in $k$, so $$\frac{a_{k+1}}{a_k} \le
\frac{a_{j+2}}{a_{j+1}} = \frac{\alpha}{1-2\alpha} \frac{(r - j -1) (j - 1)}{j+2} <
\frac{\alpha}{1-2\alpha} (r - 2) \le \frac{r - 2}{r-1} < 1$$ when $\alpha \le
1/(r +1)$. This completes the proof of unimodality of $p_n^{r,1}$.

To prove \eqref{mode}, we use the argument similar to the proof of \eqref{sqrta} in Theorem~\ref{MAIN TH-1}. By Lemma~\ref{DIFF p_n^r,d 2} and the change $x=\cos(t/2)$, we have
$$
\frac{\pi (p_{n+1}^{r,1} - p_n^{r , 1 })}{2 \alpha (1-\alpha)} = -\int_0^\pi \varphi_\alpha^n(t) \sin^2 \frac{t}{2} \cdot P_{\alpha}^{r, d} \Bigl(\cos \frac{t}{2}\Bigr) dt,
$$
and representing $P_{\alpha}^{r, 1}(x)$ via its Taylor polynomial at $x=1$,
\begin{eqnarray*}
&& \frac{\pi (p_{n+1}^{r,1} - p_n^{r , 1 })}{2 \alpha (1-\alpha)} \\
&=& -\int_0^\pi \varphi_\alpha^n(t) \sin^2 \frac{t}{2} P_{\alpha}^{r, 1}(1)dt\\
&& +\int_0^\pi \varphi_\alpha^n(t) \sin^2 \frac{t}{2} \biggl [\frac{\partial}{\partial x} P_{\alpha}^{r, 1}(1) \Bigl(1 - \cos \frac{t}{2}\Bigr)- \frac12 \frac{\partial^2}{\partial x^2} P_{\alpha}^{r, 1}(\theta(t)) \Bigl(1 - \cos \frac{t}{2}\Bigr)^2 \biggr]dt\\
&=:& -I_1(\alpha, n) + I_2(\alpha, n),
\end{eqnarray*}
where $\cos (t/2) \le \theta(t) \le 1$.

Now assume $n=n(\alpha) \to \infty$ as $\alpha \downarrow 1/(2r)$. We claim that
\begin{equation} \label{I}
I_1(\alpha, n(\alpha)) \sim \frac{\sqrt{\pi} r^3}{2 (2r-1)^{3/2}} \cdot \frac{2\alpha r - 1}{n(\alpha)^{3/2}}, \quad I_2(\alpha, n(\alpha)) \sim \frac{\sqrt{\pi} (r-1) r^3}{4 (2r-1)^{3/2}} \cdot \frac{1}{n(\alpha)^{5/2}} \quad \mbox{as} \quad \alpha \downarrow \frac{1}{2r}.
\end{equation}
Indeed, arguing as in \eqref{dif sim} and \eqref{Lebesgues} and using \eqref{phi lim}, which of course holds when $\alpha \to 1/(2r)$, we get
$$I_1(\alpha, n(\alpha)) \sim \frac{P_{\alpha}^{r, 1}(1)}{4n^{3/2}} \int_0^{\delta n^{1/2}} \varphi_\alpha^n \Bigl(\frac{s}{\sqrt{n}}\Bigr) s^2 ds \sim \frac{P_\alpha^{r, 1}(1)}{4n^{3/2}} \int_0^\infty e^{-\frac{2r-1}{4r^2} s^2} s^2 ds$$
by the dominated convergence theorem. It only remains to compute the integral and use the equation $P_{\alpha}^{r, 1}(1) = 2 \alpha r -1$ to get the first part of \eqref{I}. Similarly, $$I_2(\alpha, n(\alpha)) \sim
\frac{\frac{\partial}{\partial x} P_{\alpha}^{r, 1}(1)}{32 n^{5/2}} \int_0^{\delta n^{1/2}} \varphi_\alpha^n \Bigl(\frac{s}{\sqrt{n}}\Bigr) s^4 ds \sim \frac{\frac{\partial}{\partial x} P_{1/(2r)}^{r, 1}(1)}{32n^{5/2}} \int_0^\infty e^{-\frac{2r-1}{4r^2} s^2} s^4 ds$$ as the term coming from the second derivative of $P_{\alpha}^{r, 1}(x)$ gives a lower order contribution, and we get the second relation in \eqref{I} by computing the integral and using \eqref{pard'(1)}.

Clearly, $N_{\alpha, r}  \to \infty$ as $\alpha \downarrow 1/(2r)$, and \eqref{I} implies $I_k(\alpha, N_{\alpha, r}  -1) \sim
I_k(\alpha, N_{\alpha, r} )$, $k=1,2$. By the definition of $N_{\alpha, r} $, we have $I_2(\alpha, N_{\alpha, r}  -1) \ge I_1(\alpha, N_{\alpha, r}  -1)$ and $I_1(\alpha, N_{\alpha, r} ) > I_2(\alpha, N_{\alpha, r} )$, hence $$I_1(\alpha, N_{\alpha, r} ) \sim I_2(\alpha, N_{\alpha, r} ) \quad \mbox{as} \quad \alpha \downarrow \frac{1}{2r}.$$ We now apply \eqref{I} again to get \eqref{mode}.

For \eqref{maximum}, we argue in the same manner as above. Omitting the details, we get
\begin{eqnarray*}
2 \pi \Bigl(p_{N_{\alpha, r}} - \frac12 \Bigr) &\sim& \frac{P_\alpha^{r, 1}(1)}{N_{\alpha, r}^{1/2}} \int_0^\infty e^{-\frac{2r-1}{4r^2} s^2} ds -\frac{\frac{\partial}{\partial x} P_{1/(2r)}^{r, 1}(1)}{8N_{\alpha, r}^{3/2}} \int_0^\infty e^{-\frac{2r-1}{4r^2} s^2} s^2 ds,
\end{eqnarray*}
and the required relation follows.
\end{proof}

\section{Additional remarks and related questions}

Here we mention some issues that
deserve more attention than we can provide. Then we explain our initial interest to the problem.

Simulations show that as in the case $p_n^{r,1}$ with $r \ge 2$, the sequence $p_n^{r,d}$ with $r \ge 2d$ is increasing when $0 < \alpha < (2d-1)/(2r)$ and unimodal in the transitional phase $(2d-1)/(2r) < \alpha \le d/(r+1)$. The proofs presented here do not admit a reasonable generalization even for $d=2$. For example, simulations show that the coefficient of $P_\alpha^{r, 2}(x)$ at $x$ is negative and all the other coefficients are positive. This could be neither easily proved nor used for a proof similar to the one given here for $d=1$ as $P_\alpha^{r, 2}(x)$ has two roots on $[0,1]$ for some $\alpha$. Once unimodality in the transitional phase is established for general $d$, the asymptotic of the mode $N_\alpha^{r, d}$ could be found by exactly the same argument as used for \eqref{mode}.




Another open question is to verify that the sequence $p_n^{r,d}$ is convex/concave in the monotone modes, as observed by simulations. By the same argument as in the proof of Theorem~\ref{MAIN TH-1}, we easily get convexity (concavity, to be precise) of $p_n^{r,1}$ when $0 < \alpha \le 1/(2r)$. Unfortunately, the same method does not seem to work for $1/(r+1) < \alpha <1$.

Next we should mention that, after a preliminary draft of this paper was finished, we found that a related problem was studied in \cite{AWW}.
Their analysis uses the multivariate form of Zeilberger's algorithm to find representations and estimates
based on a linear combination of two Legendre polynomials. Also, after submitting this paper, Tamas Lengyel and the referee pointed out the paper \cite{Le11} on approximating point spread distributions. Our Fourier analytic method can also be used to simplify some of the arguments in both \cite{AWW} and \cite{Le11},
and provide integral representations for general $r$, $d$, $p$ and $q$, see \cite{DLS} for details.

Finally, let us explain the origins of our interest to the problem considered in this paper. We say that a random variable $Z$ has a double-sided exponential distribution if it has a density $\rho(x)$ of the form $\rho(x)= \alpha a e^{-a x}$ for $x>0$ and $\rho(x)= (1- \alpha) b e^{b x}$ for $x<0$, where $a, b>0$ and $\alpha \in (0,1)$. Such a specific random variable has very nice properties from the point of view of fluctuation theory, see \cite[Ch. VI.8]{Feller}. $\E Z=0$ means $b=\frac{1-\alpha}{\alpha}a$ so any centered double-sided exponential variable could be written in the form $Z = \frac1a \bigl(X Y - \frac{\alpha}{1-\alpha}(1-X)Y \bigr)$, where $X$ is a Bernoulli random variable that equals $1$ and $0$ with probability $\alpha$ and $1-\alpha$, respectively, $Y$ is a standard exponential random variable with density $e^{-x}$ for $x>0$, and $X$ and $Y$ are independent.

Consider independent identically distributed double-sided exponential random variables $Z_n$. Initially,
our goal was to show that the sequence $\P \bigl \{ Z_1 + \dots + Z_n > 0 \bigr \} - 1/2$ does not change its sign as $n$ increases. This property was verified by simulations, and a rigorous proof was required. One of the possible approaches is to show that the sequence $\P \bigl \{ Z_1 + \dots + Z_n > 0 \bigr \}$ is monotone since by the central limit theorem, its limit equals $1/2$.

Introducing independent sequences of standard exponential random variables $Y_n$ and Bernoulli random variables $X_n$, we  write
\begin{eqnarray*}
\P \bigl \{ Z_1 + \dots + Z_n > 0 \bigr \} &=& \sum_{k=1}^{n} {n \choose k} \alpha^k (1-\alpha)^{n-k} \P \Bigl \{ Y_1 + \dots + Y_k > \frac{\alpha}{1-\alpha} \Bigl ( Y_{k+1} + \dots Y_n \Bigr ) \Bigr \}\\ &=& \sum_{k=1}^{n}  {n \choose k} \alpha^k (1-\alpha)^{n-k} \P \Bigl \{ Y_1 + \dots + Y_k > \alpha \bigl ( Y_1 + \dots Y_n \bigr ) \Bigr \}\\
&=& \sum_{k=1}^{n-1}  {n \choose k} \alpha^k (1-\alpha)^{n-k} \P \bigl \{ U_{k, n-1}> \alpha \bigr \} + \alpha^n,\\
\end{eqnarray*}
where we used the well known (Karlin~\cite{K}) representation of uniform order statistics in terms of exponential random variables.

Recall the appropriate definition. Let $U_1, \dots, U_m$ be independent random variables that are uniformly distributed on $[0,1]$, and arrange them in the ascending order. The $k$-th element of this new sequence is called the $k$-th order statistics of $U_1, \dots, U_m$ and denoted by $U_{k, m}$. By definition, $U_{1, m} \le \dots \le U_{m, m}$ but all the inequalities are strict with probability one.

Note that $\P \bigl \{ U_{k, m}> \alpha \bigr \} = \P \bigl \{ S_m < k \bigr \}$, hence
$$\P \bigl \{ Z_1 + \dots + Z_n > 0 \bigr \} = \sum_{k=1}^{n-1} {n \choose k} \alpha^k (1-\alpha)^{n-k} \P \bigl \{ S_{n-1} <k \bigr \} + \alpha^n = \P \bigl \{ S_n' > S_{n-1} \bigr \} = p_{n-1}.$$ We see that monotonicity of $\P \bigl \{ Z_1 + \dots + Z_n > 0 \bigr \}$ is exactly the monotonicity of $p_n$ given by Theorem~\ref{MAIN TH-2}.

{\bf Acknowledgement:} The authors would like to thank Tamas Lengyel and the referee for pointing out the paper \cite{Le11}
and their insightful remarks and suggestions.

\end{document}